\newtheorem{thm}{Theorem}[section]
\newtheorem{defn}[thm]{Definition}
\newtheorem{prop}[thm]{Proposition}
\newtheorem{lem}[thm]{Lemma}
\newtheorem{rmq}{Remark}[section]
\DeclareMathOperator{\Hess}{\nabla^2}
\DeclareMathOperator{\Id}{I_d}
\DeclareMathOperator{\Tr}{Tr}
\newcommand{\R}{\mathbb{R}}
\newcommand{\E}{\mathbb{E}}
\newcommand{\PP}{\mathbb{P}}
\begin{document}

\title{Stability estimates for invariant measures of diffusion processes, with applications to stability of moment measures and Stein kernels}
\author{Max Fathi and Dan Mikulincer}
\date{\today}

\maketitle

\begin{abstract}
We investigate stability of invariant measures of diffusion processes with respect to $L^p$ distances on the coefficients, under an assumption of log-concavity. The method is a variant of a technique introduced by Crippa and De Lellis to study transport equations. As an application, we prove a partial extension of an inequality of Ledoux, Nourdin and Peccati relating transport distances and Stein discrepancies to a non-Gaussian setting via the moment map construction of Stein kernels. 
\end{abstract}

\section{Introduction}
Let $X_t, Y_t$ be stochastic processes in $\R^d$ which satisfy the SDEs,
\begin{equation} \label{eq_processes}
dX_t = a(X_t)dt + \sqrt{2\tau(X_t)}dB_t,\ \ dY_t = b(Y_t)dt + \sqrt{2\sigma(Y_t)}dB_t.
\end{equation}
Here $B_t$ is a standard Brownian motion, $a,b$ are vector-valued functions, and $\sigma,\tau$ take values in the cone of symmetric $d \times d$ positive definite matrices, which we shall denote by $\mathcal{S}_d^{++}$. 
Given $X_0$ and $Y_0$, we shall write the marginal laws of the processes as $X_t \sim \mu_t$ and $Y_t \sim \nu_t$. \\

Suppose that, in some sense to be made precise later, $a$ is close to $b$, and $\tau$ is close $\sigma$. One can ask whether the measure $\mu_t$ must then be close to $\nu_t$. Our goal here is to study the quantitative regime of this problem. The method used here is an adaptation of a technique developed by Crippa and De Lellis for transport equations. That method was introduced in the SDE setting in \cite{CJ18, LL18}. Our implementation here will be a bit different, to allow for estimates in weighted Sobolev space that behave better for large times, and will allow us to compare the invariant measures of the two processes, under suitable ergodic assumptions. \\

We will be especially interested in the case where $\sigma$ and $\tau$ are uniformly bounded from below. In such a situation, $X_t$ and $Y_t$ admit unique invariant measures \cite{BD17}, which we shall denote, respectively, as $\mu$ and $\nu$. In this setting, we will think about $\nu$ as the reference measure and quantify the discrepancy in the coefficients as:
$$ \beta := \|a-b\|_{L^1(\nu)} + \|\sqrt{\sigma}-\sqrt{\tau}\|_{L^2(\nu)}.$$ 
Our main result is an estimate of the form (see precise formulation below)
$$\mathrm{dist}(\mu,\nu) \leq h(\beta),$$
where $\mathrm{dist}(\cdot,\cdot)$ stands for an appropriate notion of distance, which will here be a transport distance, and $\lim\limits_{\beta\to 0} h(\beta)= 0.$\\

As an application of our stability estimate, we will show that if two uniformly log-concave measures satisfy certain similar integration by parts formula, in the sense arising in Stein's method, then the measures must be close. This problem was the original motivation of our study. 

\subsection{Background on stability for transport equations}

Part of the present work is a variant in the SDE setting of a now well-established quantitative theory for transport equations with non-smooth coefficients, pioneered by Crippa and De Lellis \cite{CDL}. 

As demonstrated by the DiPerna-Lions theory (\cite{DPL89} and later estimates in \cite{CDL}), there is a significant difference in the stability of solutions to differential equations when the coefficients are Lipschitz continuous versus when they only belong to some Sobolev space (and are not necessarily globally Lipschitz). Our focus will be on the latter, and arguably more challenging, case. As we shall later discuss, the techniques can be carried over to the setting of stochastic differential equations we are interested in here, as worked out in \cite{CJ18, LL18}.

The strategy for quantitative estimates, introduced in \cite{CDL}, in the Lagrangian setting relies on controlling the behavior of $\ln(1 + |X_t - Y_t|/\delta)$ for two flows $(X_t)$ and $(Y_t)$, with a parameter $\delta$ that will be very small, of the order of the difference between the vector fields driving the flows. A crucial idea is the use of the Lusin-Lipschitz property of Sobolev vector fields \cite{Liu77}, which allows to get Lipschitz-like estimates on large regions, in a controlled way. We will discuss this idea in more details in Section \ref{sect_ll}. The ideas of \cite{CDL} were adapted to the Eulerian setting in \cite{Sei17, Sei18}, using a transport distance with logarithmic cost. 

Existence and uniqueness of solutions to SDEs and Fokker-Planck equations with non-smooth coefficients by adapting DiPerna-Lions theory was first addressed in \cite{LBL08, Fi08}, inspiring many further developments, such as \cite{FLT10, Tre16, XZ16, CJ18}. We will not discuss much the issue of well-posedness here, and focus on more quantitative aspects of the problem. We refer for example to \cite[Section 3.1]{LL18} and \cite{Tre16} for a comprehensive discussion of the issues, in particular with respect to the different ways of defining a notion of solution. As pointed out in \cite{CJ18, LL18}, the kind of quantitative methods used here could also prove well-posedness by approximation with processes with smoother coefficients.

\section{Results}
\subsection{Main result}
We consider two diffusion processes of the form \eqref{eq_processes}, and assume that they admit unique invariant measures $\mu$ and $\nu$. We fix some real number $ p \geq 2$ with $q$ its H\"older conjugate, so that $\frac{1}{q} + \frac{1}{p} = 1$. We then make the following assumptions:
\begin{enumerate}[label={H\arabic*.}, ref=H\arabic*]
	\item \label{ass:coeff}(Regularity of coefficients) There exists a function $g: \R^d \to \R$, such that for any $x,y \in \R^d$:
	\begin{align} \label{eq_sob_to_lip}
	\|a(x) - a(y)\|,\|\sqrt{\tau(x)} - \sqrt{\tau(y)}\| \leq (g(x) + g(y))\|x-y\|,
	\end{align}
	and
	$$\|g\|_{L^{2q}(\mu)} <\infty.$$
	To simplify some notations later on, we also assume that $g \geq 1$ pointwise, which does not strengthen the assumption since we can always replace $g$ by $\max(g, 1)$. 
	\item \label{ass:density} (Integrability of relative density) Both $\mu$ and $\nu$ have finite second moments, and it holds that
	$$\left\|\frac{d\nu}{d\mu}\right\|_{L^p(\mu)}< \infty.$$
	\item \label{ass:wass} (Exponential convergence to equilibrium) There exist constants $\kappa,C_H > 0$ such that for any initial data $\mu_0$ and any $t \geq 0$ we have
	$$W_2(\mu_t,\mu)\leq C_He^{-\kappa\cdot t}W_2(\mu_0,\mu).$$

\end{enumerate}
We denote for the second moments 
$$m^2_2(\mu) = \int\limits_{\R^d}|x|^2d\mu,\  m^2_2(\nu) = \int\limits_{\R^d}|x|^2d\nu.$$

Concerning Assumption \eqref{ass:coeff}, it is the Lusin-Lipschitz property we mentioned above, and which we shall discuss in some depth in Section \ref{sect_ll} below. One should think about it as being a generalization of Lipschitz continuity, as it essentially means $\sigma$ and $a$ may be well approximated by Lipschitz functions on arbitrarily large sets. In particular, this property holds for Sobolev functions when the reference measure is log-concave.   
When $C_H = 1$, the third assumption corresponds to contractivity, which for reversible diffusion processes is equivalent to a lower bound on the Bakry-Emery curvature \cite{vRS05}. Allowing for a constant $C_H > 1$ allows to cover other examples, such as hypocoercive dynamics, notably because the assumption is then invariant by change of equivalent metric, up to the value of $C_H$. See for example \cite{Mon19}

Now, for $R > 0$, define the truncated quadratic Wasserstein distance by:
\begin{equation}
\widetilde{W}^2_{2,R}(\nu, \mu) := \inf_{\pi} \int{\min(|x-y|^2, R)d\pi},
\end{equation}
where the infimum is taken over all couplings of $\mu$ and $\nu$.
This is a distance on the space of probability measures, weaker than the classical $W_2$, defined by 
\begin{equation*}
W^2_{2}(\nu, \mu) := \inf_{\pi} \int{|x-y|^2d\pi}.
\end{equation*}
With the above notations our main result reads:
\begin{thm} \label{thm_main_stability}
	Assume \eqref{ass:coeff},\eqref{ass:density} and \eqref{ass:wass} hold, and denote 
	$$ \beta := \|a-b\|_{L^1(\nu)} + \|\sqrt{\sigma}-\sqrt{\tau}\|_{L^2(\nu)}.$$ 
	Then, for any $R > 1$,
	$$\widetilde{W}_{2,R}^2(\nu,\mu) \leq 100C_H^2R\cdot \|g\|^2_{L^{2q}(\mu)}\left\|d\nu/d\mu\right\|_{L^p(\mu)}\frac{\ln\left(\ln\left(1 + \frac{R}{\beta}\right)\right) +\ln\Big(m_2(\mu) + m_2(\nu)\Big)+\kappa\cdot R}{\kappa\cdot \ln\left(1 + \frac{R}{\beta}\right) },$$
	where $\frac{1}{q} + \frac{1}{p} = 1$.
\end{thm}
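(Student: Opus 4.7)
The plan is to adapt the Crippa--De Lellis coupling technique to SDEs, in the spirit of \cite{CJ18, LL18}. Introduce two processes solving \eqref{eq_processes} driven by a \emph{common} Brownian motion, with identical initial condition $X_0 = Y_0 \sim \nu$. Invariance of $\nu$ under the $Y$-dynamics gives $Y_t \sim \nu$ for all $t \geq 0$, while $X_t \sim \mu_t$ with $\mu_0 = \nu$; assumption \eqref{ass:wass} then yields $W_2(\mu_t, \mu) \leq C_H e^{-\kappa t} W_2(\nu, \mu) \leq C_H e^{-\kappa t}(m_2(\mu)+m_2(\nu))$, the last inequality coming from the constant coupling to $\delta_0$.

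For a scale parameter $\delta>0$ to be tuned, I apply It\^o's formula to $\phi(Z_t)$ with $Z_t := X_t - Y_t$ and $\phi(z) := \ln(1 + |z|^2/\delta^2)$; one checks $|\nabla \phi(z)| \leq 1/\delta$ and $\|\Hess\phi(z)\|_{\mathrm{op}} \leq 2/(\delta^2+|z|^2)$. Splitting $a(X_t)-b(Y_t) = (a(X_t)-a(Y_t)) + (a(Y_t)-b(Y_t))$, and likewise for $\sqrt{\tau(X_t)}-\sqrt{\sigma(Y_t)}$, the Lusin--Lipschitz hypothesis \eqref{ass:coeff} controls the ``regular'' parts by $(g(X_t)+g(Y_t))|Z_t|$; combined with the $\phi$-derivative bounds, the resulting integrands are bounded by $C(g(X_t)+g(Y_t))^2$ (using $|Z_t|^2/(\delta^2+|Z_t|^2) \leq 1$), while the coefficient-discrepancy parts contribute $\|a(Y_t)-b(Y_t)\|/\delta$ and $\|\sqrt{\tau(Y_t)}-\sqrt{\sigma(Y_t)}\|^2/\delta^2$. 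Taking expectations and using that $\mu$ is invariant for the $X$-process --- so that the density $f_s := d\mu_s/d\mu$ is the image of $d\nu/d\mu$ under the adjoint Markov semigroup, which contracts $L^p(\mu)$ for every $p\geq 1$ --- H\"older's inequality gives $\E[g(X_s)^2] + \E[g(Y_s)^2] \leq 2\|g\|_{L^{2q}(\mu)}^2 \|d\nu/d\mu\|_{L^p(\mu)}$, uniformly in $s\geq 0$. Integrating in time and using $Z_0 = 0$,
\[
 \E[\phi(Z_t)] \leq C\, t \left( \|g\|_{L^{2q}(\mu)}^2 \|d\nu/d\mu\|_{L^p(\mu)} + \frac{\beta}{\delta} + \frac{\beta^2}{\delta^2} \right).
\]

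To pass to the truncated Wasserstein, I use the elementary inequality $\min(r,R) \leq \delta^2 + \frac{R}{\ln(1+R/\delta^2)}\ln(1+r/\delta^2)$, which yields $\widetilde{W}_{2,R}^2(\nu, \mu_t) \leq \E[\min(|Z_t|^2, R)] \leq \delta^2 + \frac{R\cdot \E[\phi(Z_t)]}{\ln(1+R/\delta^2)}$. Combining with the triangle inequality, $\widetilde{W}_{2,R}\leq W_2$, and \eqref{ass:wass},
\[
\widetilde{W}_{2,R}^2(\nu,\mu) \leq 2 C_H^2 e^{-2\kappa t}(m_2(\mu)+m_2(\nu))^2 + 2\delta^2 + \frac{2 R\cdot \E[\phi(Z_t)]}{\ln(1+R/\delta^2)}.
\]
Setting $\delta^2 = \beta$ collapses the $\beta/\delta$ and $\beta^2/\delta^2$ pieces into $O(1)$ and makes the denominator $\ln(1+R/\beta)$; choosing $t$ to balance the exponential decay $C_H^2 e^{-2\kappa t}(m_2(\mu)+m_2(\nu))^2$ against the linearly growing term in $t$ (for instance the truncated choice $t = R$, which already produces the $\kappa R$ summand in the stated numerator) yields the claimed estimate.

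The main obstacle is twofold. First, It\^o's formula must be justified when $a,b,\sqrt\tau,\sqrt\sigma$ are only Sobolev, which requires the approximation and commutator machinery of \cite{CDL} adapted to SDEs in \cite{CJ18, LL18}; in particular, one needs to know that the coupling by a common Brownian motion is itself a well-defined object in this low-regularity setting. Second, the final parameter optimization is delicate: the three summands $\ln\ln(1+R/\beta)$, $\ln(m_2(\mu)+m_2(\nu))$, and $\kappa R$ in the numerator must each emerge from a distinct competing contribution, and the prefactor $C_H^2 R\, \|g\|_{L^{2q}(\mu)}^2 \|d\nu/d\mu\|_{L^p(\mu)}$ has to be preserved without inflation across all these steps.
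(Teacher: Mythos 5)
Your proposal follows essentially the same route as the paper's proof: a synchronous coupling with $X_0=Y_0\sim\nu$, It\^o's formula applied to $\ln(1+|Z_t|^2/\delta^2)$ (the paper's Lemmas \ref{lem_li_lou}--\ref{lem_finite_time}, after \cite{LL18}), the splitting $a(X)-b(Y)=(a(X)-a(Y))+(a(Y)-b(Y))$ and its analogue for the diffusion coefficients controlled by \eqref{ass:coeff}, H\"older's inequality with $\left\|d\nu/d\mu\right\|_{L^p(\mu)}$ together with the $L^p(\mu)$-contraction of the relative density, the comparison between the logarithmic transport cost and $\widetilde{W}_{2,R}$ (your elementary inequality $\min(r,R)\le \delta^2 + R\ln(1+r/\delta^2)/\ln(1+R/\delta^2)$ is a correct and slightly more direct substitute for Lemma \ref{lem_seis_w} with the optimized $\epsilon$), the triangle inequality with \eqref{ass:wass}, and the choice $\delta\asymp\beta$.

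The one place your sketch would fail if followed literally is the final time optimization. The suggested choice $t=R$ does not deliver the stated bound: with $t=R$ the ergodicity error $C_H^2e^{-2\kappa R}(m_2(\mu)+m_2(\nu))^2$ is a fixed positive quantity independent of $\beta$, whereas the right-hand side of the theorem tends to $0$ as $\beta\to 0$ for fixed $R,\kappa$ and moments; so $t$ must be sent to infinity as $\beta\to 0$. The paper takes $t_0=\kappa^{-1}\ln\left(\sqrt{(m_2^2(\mu)+m_2^2(\nu))\ln\left(1+\frac{R}{\beta}\right)}\right)$, which makes the ergodicity error of size $C_H/\sqrt{\ln(1+R/\beta)}$ and is precisely the ``balance'' you describe; it is this choice that produces the $\ln\bigl(m_2(\mu)+m_2(\nu)\bigr)$ and $\ln\ln\left(1+\frac{R}{\beta}\right)$ summands. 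Moreover, the $\kappa R$ summand does not come from the time choice at all, but from absorbing the $\delta$-term of the log-cost/Wasserstein comparison via $\beta\le R/\ln(1+R/\beta)$, i.e. $2R\beta \le 2R^2/\ln(1+R/\beta)$, which adds an $R$ to $t_0$ in the numerator. So keep your balancing principle, carry it out explicitly (it yields the paper's $t_0$), and discard both the $t=R$ instantiation and the attribution of the $\kappa R$ term to it; with that correction your argument coincides with the paper's.
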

\begin{rmq}\label{rm_w1_w2}
	Essentially, The theorem says that $\widetilde{W}_{2,R}(\nu,\mu)$ decreases at a rate which is proportional to $\sqrt{\ln\left(1 + \frac{R}{\beta}\right)^{-1}}$. We could improve the rate to $\ln\left(1 + \frac{R}{\beta}\right)^{-1}$ by considering a truncated $W_1$ distance, as shall be discussed in Remark \ref{remark_w1_again}. However, for our application to Stein kernels it is more natural to work with $W_2$.
\end{rmq}
Let us discuss now the role of the term $\left\|d\nu/d\mu\right\|_{L^p(\mu)}$ in Theorem \ref{thm_main_stability}. In order to use $\|a-b\|_{L^1(\nu)} + \|\sqrt{\sigma}-\sqrt{\tau}\|_{L^2(\nu)}$ as a measure of discrepancy, it seems necessary that the supports of $\mu$ and $\nu$ intersect, otherwise we could just change $\sigma$ and $a$ on a $\mu$-negligible set and have $\beta = 0$ in the conclusion of the theorem, which obviously fails in this particular situation. Thus, since the bound in Theorem \ref{thm_main_stability} only makes sense when $\left\|d\nu/d\mu\right\|_{L^p(\mu)}$ is finite, one may view this term as an a-priori guarantee on the common support of $\mu$ and $\nu$.\\

The logarithmic rate obtained here may seem quite weak. In the setting of transport equations, the logarithmic bounds obtained by the method considered here are sometimes actually sharp \cite{Sei18}. We do not know much about optimality in the stochastic setting, since it may be that the presence of noise would help, while the method used here cannot do better in the stochastic setting than in the deterministic setting.\\ 

As alluded to in the introduction, if the coefficients $a$ and $\sqrt{\tau}$ are actually $L$-Lipschitz, in which case $g\equiv \frac{L}{2}$ in \eqref{ass:coeff}, then one may greatly improve the rate in Theorem \ref{thm_main_stability}.
\begin{thm} \label{thm_stability_lip}
	Assume $a$ and $\sqrt{\tau}$ are $L$-Lipschitz and that \eqref{ass:wass} holds, and denote 
	$$ \beta := \|a-b\|_{L^2(\nu)} + \|\sqrt{\tau}-\sqrt{\sigma}\|_{L^2(\nu)}.$$ 
	Then, 
	$$W_{2}(\nu,\mu) \leq 15C_H^{\frac{4L^2+1}{2\kappa}}\beta\left(\frac{L}{\kappa} +1\right).$$
\end{thm}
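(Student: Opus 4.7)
The approach is a synchronous coupling combined with the $W_2$-contractivity \eqref{ass:wass}. Start with $Y_0\sim\nu$ and set $\tilde{X}_0=Y_0$, running both SDEs driven by the \emph{same} Brownian motion. Because $\nu$ is invariant for $Y$, one has $Y_t\sim\nu$ for every $t$, while $\tilde{X}_t\sim\mu_t$ with $\mu_0=\nu$. The pair $(\tilde{X}_t,Y_t)$ is an admissible coupling of $\mu_t$ and $\nu$, so the triangle inequality yields
$$W_2(\nu,\mu)\le W_2(\nu,\mu_t)+W_2(\mu_t,\mu)\le \sqrt{\E|\tilde{X}_t-Y_t|^2}+C_He^{-\kappa t}W_2(\nu,\mu).$$
Provided $C_He^{-\kappa t}<1$ and $W_2(\nu,\mu)<\infty$ (the latter follows from finite second moments, which is standard under the Lipschitz hypothesis via a Lyapunov argument), this can be rearranged to bound $W_2(\nu,\mu)$ by $\sqrt{\E|\tilde{X}_t-Y_t|^2}$ up to the prefactor $(1-C_He^{-\kappa t})^{-1}$.

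\textbf{Gr\"onwall for the coupling.} Apply It\^o's formula to $|\tilde{X}_t-Y_t|^2$, splitting the drift as
$$2\langle \tilde{X}_t-Y_t,\,a(\tilde{X}_t)-b(Y_t)\rangle = 2\langle \tilde{X}_t-Y_t,\,a(\tilde{X}_t)-a(Y_t)\rangle + 2\langle \tilde{X}_t-Y_t,\,a(Y_t)-b(Y_t)\rangle,$$
the first summand being controlled by the Lipschitz bound on $a$ and the second by Young's inequality. For the quadratic-variation piece, insert $\sqrt{2\tau(Y_t)}$ as an intermediate term and use the Lipschitz bound on $\sqrt{\tau}$ to obtain $\|\sqrt{2\tau(\tilde{X}_t)}-\sqrt{2\sigma(Y_t)}\|_{\mathrm{HS}}^2\le 4L^2|\tilde{X}_t-Y_t|^2+c\,\|\sqrt{\tau(Y_t)}-\sqrt{\sigma(Y_t)}\|^2$. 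Taking expectations and using $Y_t\sim\nu$ to recognise the $L^2(\nu)$-norms appearing in $\beta$, one arrives at
$$\frac{d}{dt}\E|\tilde{X}_t-Y_t|^2 \le \lambda\,\E|\tilde{X}_t-Y_t|^2 + c\,\beta^2,\qquad \lambda = 4L^2+1,$$
where the linear-in-$L$ cross terms from the drift are absorbed into $\lambda$ via a judicious choice of Young parameter. Integrating from the zero initial condition yields $\E|\tilde{X}_t-Y_t|^2\le (c/\lambda)\beta^2(e^{\lambda t}-1)$.

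\textbf{Optimization and main obstacle.} Choose $t_\ast=\ln(2C_H)/\kappa$, so that $C_He^{-\kappa t_\ast}=1/2$, and insert into the triangle inequality to get
$$W_2(\nu,\mu)\le 2\sqrt{\E|\tilde{X}_{t_\ast}-Y_{t_\ast}|^2}\le C\,\beta\,C_H^{(4L^2+1)/(2\kappa)}\left(\frac{L}{\kappa}+1\right),$$
the factor $L/\kappa+1$ arising from $\sqrt{(e^{\lambda t_\ast}-1)/\lambda}$ with $\lambda=4L^2+1$ and $t_\ast$ of order $1/\kappa$, which is exactly the claimed estimate. Conceptually the argument is routine; the main technical obstacle is the constant bookkeeping, namely tuning the Young parameter so that the drift contribution sits cleanly at $4L^2+1$ rather than $4L^2+O(L)$, and propagating the powers of $C_H$ faithfully through $t_\ast$ to land on the stated exponent.
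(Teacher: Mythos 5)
Your overall strategy is exactly the paper's: synchronous coupling started from $\nu$, It\^o plus Young plus the Lipschitz bounds to get $\frac{d}{dt}\E|\tilde X_t-Y_t|^2\le (4L^2+1)\E|\tilde X_t-Y_t|^2+2\beta^2$, Gr\"onwall, then the triangle inequality with Assumption (H3) and a rearrangement $\big(1-C_He^{-\kappa t}\big)^{-1}$, and finally an optimization over $t$. (Your remark that one must know $W_2(\nu,\mu)<\infty$ before rearranging is a fair point the paper leaves implicit.) The gap is in the last step. With your choice $t_\ast=\ln(2C_H)/\kappa$, the Gr\"onwall factor is
\begin{equation*}
\sqrt{\frac{e^{\lambda t_\ast}-1}{\lambda}}\;\le\;\frac{(2C_H)^{\frac{4L^2+1}{2\kappa}}}{\sqrt{4L^2+1}},\qquad \lambda=4L^2+1,
\end{equation*}
so your final bound carries an extra factor $2^{\frac{4L^2+1}{2\kappa}}$ in front of $C_H^{\frac{4L^2+1}{2\kappa}}$. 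That factor is exponentially large in $L^2/\kappa$ and cannot be absorbed into $15\left(\frac{L}{\kappa}+1\right)$, so the claim that this choice lands ``exactly'' on the stated estimate is false; you prove a strictly weaker inequality. The issue is precisely the ``constant bookkeeping'' you flag as the main obstacle, and it is not merely a matter of tuning the Young parameter in the drift: it is the coupling between the two places where $t$ enters (the factor $e^{\lambda t/2}$ from Gr\"onwall and the factor $(1-C_He^{-\kappa t})^{-1}$ from the rearrangement).

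The paper resolves this by taking $\kappa t=\ln C_H+\ln\left(1+\frac{2\kappa}{4L^2+1}\right)$ instead of $\kappa t=\ln(2C_H)$. Then $1-C_He^{-\kappa t}=\left(1+\frac{4L^2+1}{2\kappa}\right)^{-1}$, which produces the polynomial factor $\frac{4L^2+1}{2\kappa}+1\lesssim \sqrt{4L^2+1}\left(\frac{L}{\kappa}+1\right)$, while the exponential factor becomes
\begin{equation*}
e^{\lambda t/2}=\left[\left(1+\tfrac{2\kappa}{4L^2+1}\right)C_H\right]^{\frac{4L^2+1}{2\kappa}}\le e\,C_H^{\frac{4L^2+1}{2\kappa}},
\end{equation*}
using $(1+x)^{1/x}\le e$; the non-$C_H$ part of the exponential is then a universal constant rather than $2^{\frac{4L^2+1}{2\kappa}}$. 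With that choice (and $r\le 2\beta^2$ after a clean Young split giving exactly $4L^2+1$ in the drift) the stated constant $15\,C_H^{\frac{4L^2+1}{2\kappa}}\left(\frac{L}{\kappa}+1\right)$ follows. So your proof needs this modified choice of $t$; as written, the optimization step fails to give the theorem.
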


This type of estimate is part of the folklore, and a version of it appears for example in \cite{BRS}. 

\subsection{About the Lusin-Lipschitz property for Sobolev functions}
\label{sect_ll}

We will now discuss in some more depth Assumption \eqref{ass:coeff}. As mentioned previously, it is motivated by the Lusin-Lipschitz property of Sobolev functions with respect to the Lebesgue measure \cite{Liu77}: if a function $f : \mathbb{R}^d \longrightarrow \R$ satisfies $\int{|\nabla f|^pdx} < \infty$ then for a.e. $x, y \in \R^d$ we have
\begin{equation} \label{hl_bnd}
|f(x) - f(y)| \leq (M|\nabla f|(x) + M|\nabla f|(y))|x-y|,
\end{equation}
where $M$ is the Hardy-Littlewood maximal operator, defined on a non-negative function $g$ as,
$$Mg(x) := \sup_{r > 0} |B_r|^{-1}\int_{B_r(x)}{g(x)dx},$$
$C$ a dimension-free constant, and $B_r$ is the Euclidean ball of radius $r$. This operator satisfies the dimension-free continuity property
$$||Mf||_{L^p(dx)} \leq C_p||f||_{L^p(dx)},$$
when $p > 1$. The dimension-free bound on $C_p$ is due to E. Stein \cite{SS83}

In particular, if $\nabla f \in L^p(dx)$, then $f$ is $\lambda$-Lipschitz on the regions where $M|\nabla f|$ is smaller than $\lambda/2$, which are large when $\lambda$ is, by the Markov inequality. The important distinction between using an estimate on $M|\nabla f|$ instead of $\nabla f$ is that, even if both $\nabla f(x)$ and $\nabla f(y)$ are controlled, we do not automatically get an estimate on $f(x)-f(y)$, since the straight line from $x$ to $y$ may well go through a region where $|\nabla f|$ is arbitrarily large. The use of \eqref{hl_bnd} nicely bypasses this issue. 

An important issue for the applications we shall discuss here is that working in functional spaces weighted with the Lebesgue measure is not always the most natural when dealing with stochastic processes. It is often preferable to work in $L^p(\mu)$ with a probability measure adapted to the problem considered, which here shall be the invariant measure of the stochastic process considered. However, in general the maximal operator has no reason to be continuous over $L^p(\mu)$, unless $\mu$ has density with respect to the Lebesgue measure that is uniformly bounded from above and below on its support. As soon as $\mu$ is not compactly supported, this cannot be the case. Therefore, we shall make strong use of a work of Ambrosio, Bru\'e and Trevisan \cite{ABT}, which proves a Lusin-type property for Sobolev functions with respect to a log-concave measure. The proof uses an operator different from the Hardy-Littlewood maximal operator, more adapted to the setting. We shall not discuss here the specifics of that operator, since we only need the Lusin property, and not the maximal operator itself. The exact statement of their result, in the restricted setting of log-concave measures on $\R^d$, is as follows: 

\begin{prop} \label{prop_RCD}
Let $\mu$ be a log-concave measure, and $p \geq 2$. Then for any function $f \in W^{1,p}(\mu)$, there exists a function $g$ such that
$|f(x)-f(y)| \leq (g(x) + g(y))|x-y|$ for a.e. $x$ and $y$, and with $||g||_{L^p(\mu)} \leq C_p||\nabla f||_{L^p(\mu)}$ with $C_p$ some universal constant, that only depends on $p$. 
\end{prop}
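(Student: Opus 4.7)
The plan is to follow the classical pattern for the Lusin--Lipschitz property of Sobolev functions on Euclidean space, but to replace the Hardy--Littlewood maximal operator, which is not $L^p(\mu)$-bounded for a generic log-concave $\mu$, by an operator adapted to the measure.

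Step 1 is a pointwise inequality. For sufficiently smooth $f$ one first derives, by integrating $\nabla f$ along the segment from $x$ to $y$ and telescoping differences of averages on a dyadic sequence of balls (each step controlled by a Poincar\'e inequality on balls), a bound of the form
\[
|f(x)-f(y)| \le C\,|x-y|\bigl(T|\nabla f|(x)+T|\nabla f|(y)\bigr)
\]
for some positive maximal-type operator $T$ acting on $|\nabla f|$. This is essentially the same argument that yields \eqref{hl_bnd} when $T$ is the Hardy--Littlewood operator; the novelty will be in choosing $T$ differently.

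Step 2 is the $L^p(\mu)$-boundedness of $T$, i.e.\ $\|Th\|_{L^p(\mu)}\le C_p\|h\|_{L^p(\mu)}$ for $h=|\nabla f|$. Here the standard Hardy--Littlewood operator with Lebesgue averages fails in the log-concave setting: the Gaussian weight, for instance, is not an $A_p$ Muckenhoupt weight, since it decays too fast at infinity. Two natural remedies suggest themselves. One can restrict the supremum defining $T$ to balls of radius $\le r(x)$, with $r(x)$ calibrated so that the log-concave density is essentially constant on such balls; convexity of $-\log\rho_\mu$ gives one-sided exponential bounds on density ratios, after which a Vitali covering argument delivers weak-$(1,1)$ and hence $L^p$ estimates. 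Alternatively, one can take $Th(x):=\sup_{t>0}P_t h(x)$, where $P_t$ is the symmetric Markov semigroup associated with the Dirichlet form $\int|\nabla f|^2\,d\mu$: its $L^p(\mu)$-boundedness is Stein's maximal ergodic theorem, and the Bakry--\'Emery $CD(0,\infty)$ gradient bound $|\nabla P_t f|\le P_t|\nabla f|$ (which is precisely where log-concavity enters) lets one combine short-time heat-kernel estimates for $|f-P_t f|$ with gradient estimates for $P_t f$ to reprove Step 1 with this choice of $T$.

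The main obstacle is making Steps 1 and 2 compatible, i.e.\ choosing a single $T$ for which the pointwise bound and the $L^p(\mu)$-estimate hold simultaneously. The failure of $A_p$ conditions for log-concave measures is the reason one cannot simply quote the Euclidean Lusin--Lipschitz theorem together with Stein's $L^p$ inequality; some tailor-made operator of the kind developed in \cite{ABT} is needed, and verifying the matching pointwise bound against the maximal inequality for the chosen operator is the technical heart of the proof.
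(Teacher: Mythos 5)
There is a genuine gap here: what you have written is a programme, not a proof, and you say so yourself --- ``verifying the matching pointwise bound against the maximal inequality for the chosen operator is the technical heart of the proof'' is exactly the step that is never carried out. For the record, the paper does not reprove this statement either: it quotes \cite[Theorem 4.1]{ABT} for $p=2$ and observes that the only obstruction to general $p\geq 2$ in that argument is the Riesz inequality, which in the smooth setting holds for all $p$ by \cite{Bak87}. Measured against that, your sketch has two concrete problems. First, your remedy (a) --- a maximal operator restricted to balls of radius $r(x)$ on which the log-concave density is essentially constant --- cannot give the stated conclusion: the inequality $|f(x)-f(y)|\leq (g(x)+g(y))|x-y|$ must hold for a.e.\ pair $x,y$ at \emph{arbitrary} mutual distance, and the chaining/telescoping argument behind \eqref{hl_bnd} needs averages on balls of radius comparable to $|x-y|$ centered at $x$ and $y$; for a Gaussian-type density the admissible radius $r(x)\sim 1/|x|$ is far too small, and the intermediate balls along the segment are not controlled by $T|\nabla f|(x)+T|\nabla f|(y)$ alone.

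Second, your remedy (b) --- the semigroup maximal function $\sup_{t}P_t$, Stein's maximal theorem, and the $CD(0,\infty)$ commutation $|\nabla P_t f|\leq P_t|\nabla f|$ --- is indeed the right direction (it is essentially the route of \cite{ABT}), but as stated it does not close. When you estimate $|f-P_tf|$ by writing $f-P_tf=-\int_0^t \Delta P_sf\,ds$, the natural quantity that appears is a maximal function of $\Delta^{1/2}f$ (or of $\Delta P_s f$), not of $|\nabla f|$; to obtain the homogeneous bound $\|g\|_{L^p(\mu)}\leq C_p\|\nabla f\|_{L^p(\mu)}$ claimed in the proposition, one must convert $\|\Delta^{1/2}f\|_{L^p(\mu)}$ into $\|\nabla f\|_{L^p(\mu)}$, i.e.\ invoke the boundedness of the Riesz transform under the curvature condition guaranteed by log-concavity. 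This ingredient is absent from your outline, and it is precisely the one $p$-dependent point: it is why \cite{ABT} state their result only for $p=2$ in the RCD setting, and why the paper appeals to \cite{Bak87} to cover all $p\geq 2$ on $\R^d$. Without either carrying out this step or citing it, the proposal does not establish the proposition.
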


This statement is proved in \cite[Theorem 4.1]{ABT}, and also holds for maps taking values in some Hilbert space. It is written there only for $p=2$, but the reason for that restriction is that they work in the more general setting of possibly nonsmooth RCD spaces, rather than just $\R^d$ endowed with a measure. The only point where they require the restriction to $p=2$ is when using the Riesz inequality \cite[Remark 3.9]{ABT}, which in the smooth setting is known for general values of $p$, as proved in \cite{Bak87}.

\subsection{Related works}

As mentioned previously, the adaptation of the Crippa-De Lellis method to derive quantitative estimates for stochastic differential equations was already considered in \cite{CJ18} and  \cite{LL18}. 

The results of \cite{LL18} give stability estimates with bounds that depend on $||\nabla \sigma||_{L^p(dx)}$. Considering estimates weighted with the Lebesgue measure allows to use the Hardy-Littlewood maximal function directly. As mentioned above, the main focus here is to get estimates that are weighted with respect to a probability measure adapted to the problem, which may behave very differently, for example when the coefficients of the two SDE are uniformly close, but not compactly supported. 

To use estimates in weighted space, \cite{CJ18} considers functions such that
$$\int{(M|f|)^2d\mu_t} + \int{(M|\nabla f|)^2d\mu_t} < \infty,$$
with $\mu_t$ the flow of the SDE. The authors also consider other function spaces of the same nature, sharper in dimension one, or that handle weaker integrability conditions on $M|\nabla f|$ than $L^p$ (but stronger than $L^1$). Since the space depends on the law of the flow at all times, it may be difficult to determine estimates on such norms. For the application considered in Section \ref{sect_intro_stein}, we do not know whether the approach of \cite{CJ18} could apply. 

We shall also focus on establishing very explicit quantitative estimates in transport distance, highlighting in particular the dependence on the dimension. 

Another approach was developed in \cite{BRS} to directly obtain relative entropy estimates between the distributions at finite times. The upside of that approach is that the quantitative estimates are quite stronger, depending polynomially on some distance between the coefficients. The two downsides are that they depend on stronger Sobolev norms, requiring that the derivatives of the two diffusion coefficients are close in some sense, as well as a-priori Fisher information-like bounds on the relative densities, rather than $L^p$ bounds. Fisher information-like estimates were then derived in \cite{BRS18} by directly comparing generators via a Poisson equation, also using stronger Sobolev norms. 

Finally, when the two diffusion coefficients match, one can derive relative entropy bounds via Girsanov's theorem. Unfortunately, this strategy cannot work when the two diffusion coefficients differ. 

\subsection{An application to Stein kernels} \label{sect_intro_stein}
We now explain how our result might be applied in the context of Stein's method for bounding distances between probability measures. The theory was developed by C. Stein in \cite{Ste72, Ste86} to control distances to the standard Gaussian along the central limit theorem. Since then it has found many applications for bounding distances between probability measures, in both Gaussian and non-Gaussian situations \cite{Ros11, NP12, Cha14}. At the heart of the theory lies the following observation, sometimes called Stein's lemma (see \cite{Ros11}):

 If $G \sim \gamma$ is the standard Gaussian in $\R^d$ then it satisfies the following integration by parts formula, for any regular test function $f$,
$$\E\left[\langle \nabla f(G),G\rangle\right] = \E\left[\Delta f(G)\right],$$
where $\Delta$ is the Laplacian. Moreover, the Gaussian is the only measure which satisfies this formula. 

Given a measure $\nu$ on $\R^d$ and $X \sim \nu$, a matrix valued map $\tau:\R^d \to M^d(\R)$, is said to be a Stein kernel for $\nu$, if it mimics the above formula:
\begin{equation} \label{eq_stein_kernel}
\E\left[\langle \nabla f(X),X\rangle\right] = \E\left[\langle \Hess f(X),\tau(X)\rangle_{HS}\right].
\end{equation}
Observe that the map $\tau_\gamma \equiv \mathrm{I_d}$, which is constantly identity, is a Stein kernel for $\gamma$. Stein's lemma suggests that if $\tau_\nu$ is close to the identity then $\nu$ should be close $\gamma$. This is in fact true, and there are many examples of precise quantitative statements implementing this idea, for various distances between measures, such as transport distances, the total variation distance, or the Kolmogorov distance in dimension 1. The one most relevant to the present work  is an inequality of \cite{LNP15}, which states that for any $\tau$ which is a Stein kernel for a measure $\nu$,
\begin{align} \label{eq_WSH}
W_2^2(\nu,\gamma) \leq \|\tau-\mathrm{I_d}\|_{L^2(\nu)},
\end{align}
where $W_2$ is the quadratic Wasserstein distance. The proof of this inequality strongly relies on Gaussian algebraic identities, such as the Mehler formula for the Ornstein-Uhlenbeck semigroup. We are interested in similar estimates when neither of the measures are Gaussian. The main motivation comes from the fact that Stein's method, in its classical implementations, is hard to use for target measures that do not satisfy certain exact algebraic properties (typically, explicit knowledge of the eigenvectors of an associated Markov semigroup). We shall prove a weaker inequality holds for certain non-Gaussian reference measures and for one particular construction of Stein kernels. To understand this construction we require the following definition.

\begin{defn}[Moment map]
	Let $\mu$ be a measure on $\R^d$. A moment map of $\mu$ is a convex function $\varphi:\R^d \to \R$ such
	that $e^{-\varphi}$ is a centered probability density whose push-forward by $\nabla \varphi$ is $\mu$.
\end{defn}
As was shown in \cite{CEK15, San16}, if $\mu$ is centered and has a finite first moment and a density,
then its moment map exists and is unique as long as we enforce essential continuity at the boundary of its support.
The moment map $\varphi$ can be realized as the optimal transport map between some source log-concave measure and the target measure $\mu$, where we enforce that gradient of the source measure's potential must equal the transport map itself. The correspondence between the convex function $\varphi$ and the measure $\mu$ is actually a bijection, up to a translation of $\varphi$, and the measure associated with a given convex function is known as its moment measure. 

If $\mu$ has a density $\rho$, then $\varphi$ solves the Monge-Amp\`ere-type PDE
$$e^{-\varphi} = \rho(\nabla \varphi)\det(\nabla^2 \varphi).$$
This PDE, sometimes called the toric K\"ahler-Einstein PDE, first appeared in the geometry literature \cite{WZ04, Don08, BB13, Leg16}, where it plays a role in the construction of K\"ahler-Einstein metrics on certain complex manifolds. Variants with different nonlinearities have recently been considered, for example in \cite{HS20}. 

The connection between moment maps and Stein kernels was made in \cite{Fat18}. Specifically, it was proven that if $\varphi$ is the moment map of $\mu$, then (up to regularity issues) the matrix valued map,
\begin{align} \label{eq_moment_kernel}
\tau_\mu := \Hess\varphi (\nabla \varphi ^{-1}),
\end{align}
is a Stein kernel for $\mu$. Since $\varphi$ is a convex function, $\tau_\mu$ turns out to be supported on positive semi-definite matrices.
For this specific construction of a Stein kernel we will prove the following analogue of \eqref{eq_WSH}.
\begin{thm} \label{thm_stein}
	Let $\mu$ be a log-concave measure on $\R^d$ such that
	$$\alpha \mathrm{I_d} \leq -\nabla^2\ln\left(d\mu/dx\right) \leq \frac{1}{\alpha}\mathrm{I_d},$$
	for some $\alpha \in (0, 1]$ and let $\tau_\mu$ be its Stein kernel defined in \eqref{eq_moment_kernel}. If $\nu$ is any other probability measure and $\sigma$ is a Stein kernel for $\nu$ such that \eqref{eq_processes} is well defined, then for $\beta = \|\sqrt{\tau} - \sqrt{\sigma}\|_{L^2(\mu)}$ and $M = \max\left(m_2^2(\mu),m_2^2(\nu)\right)$,
	$$W_2^2(\mu,\nu) \leq C\alpha^{-6}d^{3}M\ln(M)\|d\nu/d\mu\|_\infty \frac{\ln\left(\ln\left(1+\frac{M}{\beta}\right)\right) + M\ln(M)}{\ln\left(1+\frac{M}{\beta}\right)}.$$
	Moreover, if $\mu$ is radially symmetric, has full support, and $d >c$, for some universal constant $c > 0$,
	$$W_2^2(\mu,\nu) \leq C\alpha^{-20}d^{7/2}M\ln(M)\|d\nu/d\mu\|_{L^{2}(\mu)} \frac{\ln\left(\ln\left(1+\frac{M}{\beta}\right)\right) + M\ln(M)}{\ln\left(1+\frac{M}{\beta}\right)}.$$
	Finally, if $\sqrt{\tau_\mu}$ is $L$-Lipschitz, then
	$$W_2^2(\mu,\nu) \leq 100 \alpha^{-(4L^2+1)}\left(2L+1\right)\beta.$$
\end{thm}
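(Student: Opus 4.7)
The plan is to recast the problem as a stability question between two SDEs and then invoke Theorems \ref{thm_main_stability} and \ref{thm_stability_lip}. Specifically, take the two processes in \eqref{eq_processes} to be
$$dX_t = -X_t\, dt + \sqrt{2\tau_\mu(X_t)}\, dB_t,\qquad dY_t = -Y_t\, dt + \sqrt{2\sigma(Y_t)}\, dB_t,$$
so that $a(x)=b(x)=-x$. The Stein identity \eqref{eq_stein_kernel} for $\tau_\mu$ and $\sigma$ is precisely the statement that $\mu$ and $\nu$ are invariant for the corresponding generators. Since the drifts coincide, the discrepancy $\beta$ appearing in the source theorems collapses to the diffusion term alone, which I would then transfer from $L^2(\nu)$ to the $L^2(\mu)$-norm used in the target via H\"older's inequality against $d\nu/d\mu$.

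For the third (Lipschitz) assertion the argument is the most direct: $a(x)=-x$ is $1$-Lipschitz, so once \eqref{ass:wass} is verified for the $X$-process, Theorem \ref{thm_stability_lip} applies. Exponential contraction follows from a synchronous coupling, with the $\alpha$-log-concavity of $\mu$ (transported through the moment map) producing a Bakry-Emery-style curvature bound for the generator of $X_t$ with $\kappa$ proportional to $\alpha$. Plugging this into Theorem \ref{thm_stability_lip} then recovers the announced $\alpha^{-(4L^2+1)}(2L+1)\beta$.

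For the first two assertions I would apply Theorem \ref{thm_main_stability}. Assumption \eqref{ass:density} is given by hypothesis, with $p=\infty$ for the general bound and $p=2$ in the radial case, and \eqref{ass:wass} follows as above from log-concavity of $\mu$ together with the upper Hessian bound that controls the diffusion coefficient. The heart of the matter is verifying \eqref{ass:coeff} for $\sqrt{\tau_\mu}$ quantitatively in $\alpha$ and $d$. Because $\mu$ is log-concave, Proposition \ref{prop_RCD} reduces this to the Sobolev estimate $\|\nabla\sqrt{\tau_\mu}\|_{L^{2q}(\mu)}$; using $\tau_\mu=\Hess\varphi\circ(\nabla\varphi)^{-1}$ and the push-forward identity $(\nabla\varphi)_{\#}e^{-\varphi}=\mu$, this becomes a question about integrability of third derivatives of $\varphi$ against its source measure $e^{-\varphi}$.

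The main obstacle is therefore extracting sharp third-derivative bounds on the moment map from the toric Monge-Amp\`ere PDE $e^{-\varphi}=\rho(\nabla\varphi)\det(\nabla^2\varphi)$ under the sandwich hypothesis $\alpha\Id\le-\nabla^2\ln\rho\le\alpha^{-1}\Id$. In the radially symmetric case $\varphi$ depends only on $|x|$ and the PDE reduces to an ODE, which is what allows the higher integrability $2q=4$ needed when $p=2$, at the cost of the explicit dimensional factor $d^{7/2}$ from summing coordinate contributions. In the general case one must invoke interior regularity for the K\"ahler-Einstein equation together with weighted Poincar\'e-type inequalities for $e^{-\varphi}$, and tracking the cascade of constants through differentiation, composition and change of variables produces the $\alpha^{-6}d^{3}$ dependence. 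Once these regularity bounds are in hand, the stated inequalities fall out by substituting a balanced choice such as $R\sim M\ln M$ into Theorem \ref{thm_main_stability} and passing from the truncated to the full $W_2$ distance using the second-moment hypothesis.
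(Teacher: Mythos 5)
Your overall strategy is the same as the paper's: recast the problem as the stability of the two diffusions $dX_t=-X_t\,dt+\sqrt{2\tau_\mu(X_t)}dB_t$ and $dY_t=-Y_t\,dt+\sqrt{2\sigma(Y_t)}dB_t$, note that the Stein identities make $\mu,\nu$ the invariant measures, and feed the hypotheses into Theorems \ref{thm_main_stability} and \ref{thm_stability_lip} (with $p=\infty$ in general, $p=2$ in the radial case, and $R\sim M\ln M$ via Lemma \ref{lem_truncwass}). However, your verification of Assumption \eqref{ass:wass} is not correct as stated. A synchronous coupling for $X_t$ gives $\tfrac{d}{dt}\E\|X_t-X_t'\|^2 \le -2\E\|X_t-X_t'\|^2+\E\|\sqrt{\tau_\mu}(X_t)-\sqrt{\tau_\mu}(X_t')\|_{HS}^2$, and since $\sqrt{\tau_\mu}$ is in general only Sobolev (and even in the Lipschitz case may have $L\ge 1$), the noise term can cancel or overwhelm the drift contraction; no contraction rate follows this way. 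Moreover, the generator $-x\cdot\nabla+\langle\tau_\mu,\Hess\cdot\rangle_{HS}$ is not a Euclidean gradient diffusion for $\mu$, so ``$\alpha$-log-concavity of $\mu$ gives Bakry--Emery curvature $\kappa\propto\alpha$'' is not a valid step. The paper's mechanism (Lemma \ref{lem_stein_contraction}) is structurally different: by a result of Kolesnikov, the conjugated process $\nabla\varphi^*(X_t)$ is the canonical diffusion on the weighted manifold $(\R^d,(\nabla^2\varphi)^{-1},e^{-\varphi})$ and satisfies CD$(1/2,\infty)$ with the \emph{universal} rate $\kappa=1/2$; $\alpha$ enters only through the metric comparison $\alpha|x-y|^2\le d_\varphi(x,y)^2\le\alpha^{-1}|x-y|^2$, i.e.\ through $C_H$. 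This distinction is quantitatively essential: with your $\kappa\propto\alpha$, Theorem \ref{thm_stability_lip} would give a factor $C_H^{(4L^2+1)/(2\kappa)}$ of order $\alpha^{-c(4L^2+1)/\alpha}$, which does not recover the stated $\alpha^{-(4L^2+1)}(2L+1)\beta$ bound, nor the $\alpha^{-6}$ and $\alpha^{-20}$ powers in the first two bounds.

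The second gap is that the verification of Assumption \eqref{ass:coeff} — which is the technical core of the theorem — is only gestured at. You correctly reduce, via Proposition \ref{prop_RCD} and the push-forward identity, to Sobolev bounds on third derivatives of $\varphi$ against $e^{-\varphi}$, but ``interior regularity for the K\"ahler--Einstein equation together with weighted Poincar\'e-type inequalities ... produces the $\alpha^{-6}d^3$ dependence'' is not an argument and does not justify the exponents. The paper's proof of Lemma \ref{lem_lusin_Lipschitz} rests on specific quantitative inputs: Klartag's moment bounds $\int(\partial^2_{ee}\varphi)^p e^{-\varphi}dx\le 8^pp^{2p}$, the Klartag--Kolesnikov estimate $\int\langle(\nabla^2\varphi)^{-1}\nabla\partial^2_{ee}\varphi,\nabla\partial^2_{ee}\varphi\rangle e^{-\varphi}dx\le C$, the two-sided Hessian bounds $\alpha\Id\le\nabla^2\varphi\le\alpha^{-1}\Id$, and an integration-by-parts scheme for the mixed derivatives $\partial^3_{ijk}\varphi$ (Proposition \ref{prop_3rd_order_bnds}); passing from $\tau_\mu$ to $\sqrt{\tau_\mu}$ uses the lower bound $\sqrt{\alpha}\Id\le\sqrt{\tau_\mu}$. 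In the radial case the needed $L^4$ bound is not merely ``an ODE reduction'': it uses the Sherman--Morrison formula for $(\nabla^2\varphi)^{-1}$, the differentiated Monge--Amp\`ere equation, and Paouris' reverse H\"older inequality for negative moments of radially symmetric log-concave measures, which is where the restriction $d>c$ comes from — a hypothesis your sketch does not explain. Finally, note that the drift discrepancy does vanish since both drifts are $-x$, but your appeal to H\"older to move $\|\sqrt{\tau}-\sqrt{\sigma}\|_{L^2(\nu)}$ to an $L^2(\mu)$ norm needs either the $\|d\nu/d\mu\|_\infty$ assumption or a higher moment of $g$-type control; this should be said precisely rather than in passing.
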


It should be emphasized that, except in dimension one, Stein kernels are \emph{not} unique. Different constructions than the one studied here have been provided for example in \cite{Cha09, CFP17, MRS18, NPR10}. Unlike the functional inequalities of \cite{LNP15} for the Gaussian measure, our results will only work for the Stein kernels constructed from moment maps (at least for one of the two measures). In particular, in order to define a stochastic flow from a Stein kernel, we must require the kernel to take positive values, which to our knowledge is not guaranteed for other constructions. 

While this estimate is somewhat weak, it seems to be one of the few instances where we can estimate a distance from a discrepancy for a class of target measures, without explicit algebraic requirements for an associated Markov generator. Recently, there has been progress on implementing Stein's method for wide classes of target measures via Malliavin calculus \cite{FSX19, GDVM19}. 

Note that if one of the two measures is Gaussian, since the natural Stein kernel for the standard Gaussian is constant, and hence Lipschitz, one could use the stronger Theorem \ref{thm_stability_lip} to get a stability estimate, which would still be weaker than that of \cite{LNP15}, but with the sharp exponent.

One may wonder why we do not prove this type of estimate directly using Stein's method. The key difference lies in that we do not need a second-order regularity bound on solutions of Stein's equation, which we do not even know how to prove. To be more precise, the natural way to try to use Stein's method for this problem would be to apply the generator approach using the generator of the process $dX_t = -X_tdt + \sqrt{2\tau(X_t)}dB_t$, where $\tau$ is the Stein kernel for $\mu$. Applying Stein's method to bound say the $W_1$ distance would require us to bound $||\nabla f||_{\infty}$ and $||\nabla^2 f||_{\infty}$ for solutions to the Stein equation
$$-x \cdot \nabla f + \operatorname{Tr}(\tau \nabla^2 f) = g -\int{gd\mu}$$
for arbitrary $1$-Lipschitz data $g$. While a slightly stronger version of Assumption \eqref{ass:wass} could be used to prove bounds on $||\nabla f||_{\infty}$, the techniques used here would not help to bound $||\nabla^2 f||_{\infty}$. So using Stein's method would require some ingredients we do not have. Indeed, in general proving second-order bounds is usually the most difficult step in implementing Stein's method via diffusion processes, and in the literature has mostly been done for measures satisfying certain algebraic properties, such as having an explicit orthogonal basis of polynomials that are eigenvectors for an associated diffusion process (for example Gaussians or gamma distributions). 

\section{Proofs of stability bounds}
A rough outline of the proofs is as follows: as a first step we will use It\^o's formula to show that \eqref{ass:coeff} implies bounds on the measures $\mu_t$ and $\nu_t$, for fixed $t$. Indeed, \eqref{ass:coeff} will allow us to replace quantities like $\|\tau(X_t)-\tau(Y_t)\|$, which will arise through the use of It\^o's formula by something more similar to $\|X_t-Y_t\|$. We will then use \eqref{ass:density} to transfer those estimate to the measure $\nu$ as well.\\
After establishing that $\mu_t$ and $\nu_t$ are close, \eqref{ass:wass} will be used to establish the same for $\mu$ and $\nu$.\\

We first demonstrate this in the easier case of globally Lipschitz coefficients. 

\subsection{Lipschitz coefficients - proof of Theorem \ref{thm_stability_lip}}
	\begin{proof}[Proof of Theorem \ref{thm_stability_lip}]
		By It\^o's formula, we have
		$$d\|X_t - Y_t\|^2 = 2\langle X_t-Y_t, a(X_t) -b(Y_t)\rangle dt + \sqrt{8}(X_t-Y_t)(\sqrt{\sigma}(X_t)-\sqrt{\tau}(Y_t))dB_t + \|\sqrt{\sigma}(X_t)-\sqrt{\tau}(Y_t)\|_{HS}^2dt.$$
		So,
		\begin{align*}
		\frac{d}{dt}\E\left[\|X_t - Y_t\|^2\right] \leq \E\left[\|X_t - Y_t\|^2\right] + \E\left[\|a(X_t)-b(Y_t)\|^2\right] + \E\left[\|\sqrt{\sigma}(X_t)-\sqrt{\tau}(Y_t)\|_{HS}^2\right].
		\end{align*}
		We have
		\begin{align*}
		\E\left[\|a(X_t)-b(Y_t)\|^2\right] &\leq 2\E\left[\|a(X_t)- a(Y_t)\|^2\right] + 2\E\left[\|a(Y_t)- b(Y_t)\|^2\right]\\
		&\leq 2L^2\E\left[\|X_t-Y_t\|^2\right] +2\|a-b\|_{L^2(\nu_s)}^2,
		\end{align*}
		and
		\begin{align*}
		\E\left[\|\sqrt{\sigma}(X_t)-\sqrt{\tau}(Y_t)\|_{HS}^2\right] &\leq 2\E\left[\|\sqrt{\sigma}(X_t)-\sqrt{\sigma}(Y_t)\|_{HS}^2\right] + 2\E\left[\|\sqrt{\sigma}(Y_t)- \sqrt{\tau}(Y_t)\|^2\right]\\
		&\leq 2L^2\E\left[\|X_t-Y_t\|^2\right] +2\|\sqrt{\sigma}-\sqrt{\tau}\|_{L^2(\nu_s)}^2.
		\end{align*}
		Combine the above displays to obtain,
		$$\frac{d}{dt}\E\left[\|X_t - Y_t\|^2\right] \leq (1+4L^2)\E\left[\|X_t - Y_t\|^2\right] +2\|a-b\|_{L^2(\nu_s)}^2 + 2\|\sqrt{\sigma}-\sqrt{\tau}\|_{L^2(\nu_s)}^2.$$
		We choose $\mu_0 = \nu_0 = \nu$ so that $\nu_s = \nu$ for all $s \geq 0$, and denote $r = 2\|a-b\|_{L^2(\nu)}^2 + 2\|\sqrt{\sigma}-\sqrt{\tau}\|_{L^2(\nu)}^2.$
		To bound $W_2^2(\nu,\mu)$, we consider the differential equation
		$$f'(t) = (1+4L^2)f(t) +r, \text{ with initial condition } f(0) = 0.$$
		Its unique solution is given by $f(t) = r \frac{e^{(4L^2 + 1)t}-1}{4L^2 + 1}$.
		Thus, by Gronwall's inequality
		$$W^2_2(\nu,\mu_t) = W^2_2(\nu_t,\mu_t) \leq \E\left[\|X_t - Y_t\|^2\right] \leq r \frac{e^{(4L^2 + 1)t}-1}{4L^2 + 1}.$$
		By Assumption \eqref{ass:wass} we also know that 
		$$W_2(\mu_t,\mu)\leq C_He^{-\kappa \cdot t}W_2(\nu,\mu).$$
		Thus,
		$$W_2(\nu,\mu)\leq  W_2(\nu,\mu_t) + W_2(\mu_t,\mu)\leq \sqrt{r \frac{e^{(4L^2 + 1)t}-1}{4L^2 + 1}} + C_H e^{-\kappa \cdot t}W_2(\nu,\mu),$$
		or equivalently when $t$ is large enough
		$$W_2(\nu,\mu) \leq\sqrt{\frac{r}{4L^2+1}}  \frac{\sqrt{e^{(4L^2 + 1)t}-1}}{1-e^{-\kappa\cdot t}C_H } \leq\sqrt{\frac{r}{4L^2+1}}  \frac{e^{(2L^2 + 1)t}}{1-e^{-\kappa\cdot t}C_H}.$$
		We now take $t = \frac{\ln\left(1+ \frac{2\kappa}{4L^2+1}\right) + \ln \left(C_H\right)}{\kappa}$ to get
		\begin{align*}
		W_2(\nu,\mu) &\leq\sqrt{\frac{r}{4L^2+1}} \left(\frac{4L^2 + 1}{2\kappa}+1\right)\left(1 + \frac{2\kappa}{4L^2 + 1}\right)^\frac{4L^2 +1}{2\kappa}C_H^\frac{4L^2 +1}{2\kappa} \\
		&\leq 10C_H^\frac{4L^2 +1}{2\kappa}\sqrt{r}\left(\frac{L}{\kappa} + 1\right).
		\end{align*}
		To finish the proof it is enough to observe that $r \leq 2\beta^2$.
	\end{proof}
	
\subsection{Proof of Theorem \ref{thm_main_stability}}
To prove Theorem \ref{thm_main_stability}, we will first show that, under suitable assumptions, for a given $t >0$, the measure $\mu_t$ cannot be too different than $\nu_t$. Following \cite{CDL} we define the logarithmic transport distance, which serves as a natural measure of distance between $\mu_t$ and $\nu_t$:
$$\mathcal{D}_{\delta}(\mu, \nu) := \inf_{\pi} \int{\ln\left(1 + \frac{|x-y|^2}{\delta^2}\right)d\pi},$$
where $\delta > 0$ and the infimum is taken over all couplings of $\mu$ and $\nu$, i.e. $\mathcal{D}_\delta$ is a transport cost (but not a distance, and the cost is concave, not convex). 

We have the following connection between $\mathcal{D}_{\delta}$ and $\widetilde{W}_{2,R}^2$, which is essentially the same as \cite[Lemma 5]{Sei17}. The proof of this lemma may be found in the appendix.
\begin{lem} \label{lem_seis_w}
	For any $R,\delta, \epsilon > 0$, we have
	$$\widetilde{W}^2_{2,R}(\mu, \nu) \leq \delta^2\exp\left(\frac{\mathcal{D}_{\delta}(\mu, \nu)}{\epsilon}\right) + R\epsilon + R\frac{\mathcal{D}_{\delta}(\mu, \nu)}{\ln\left(1+\frac{R^2}{\delta^2}\right)}.$$
\end{lem}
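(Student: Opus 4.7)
My plan is to start from a (near-)optimal coupling $\pi$ for the logarithmic cost, so that $\int \ln(1+|x-y|^2/\delta^2)\, d\pi = \mathcal{D}_\delta(\mu,\nu) =: h$. Existence of such a $\pi$ follows from the standard lower-semicontinuity machinery for optimal transport with a continuous non-negative cost; otherwise one works with an $\eta$-approximate minimizer and lets $\eta \to 0$ at the end. Since this $\pi$ is a valid competitor in the infimum defining $\widetilde{W}^2_{2,R}(\mu,\nu)$, it suffices to estimate $\int \min(|x-y|^2, R)\, d\pi$ against the three terms on the right-hand side.

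The key device is a three-piece partition of the space with thresholds tuned so that each piece contributes exactly one term of the bound. Setting $M_1 := \delta^2 \exp(h/\epsilon)$ and $M_2 := R^2$, I decompose the domain of integration into $A_1 = \{|x-y|^2 \leq M_1\}$, $A_2 = \{M_1 < |x-y|^2 \leq M_2\}$, and $A_3 = \{|x-y|^2 > M_2\}$. On $A_1$ I bound the integrand pointwise by $|x-y|^2 \leq M_1$, which contributes at most $\delta^2 \exp(h/\epsilon)$ to the integral. On $A_2$ and $A_3$ I use the crude bound $\min(|x-y|^2, R) \leq R$ and estimate the $\pi$-masses via Markov's inequality applied to the log cost, which gives $\pi(\{|x-y|^2 > M\}) \leq h/\ln(1+M/\delta^2)$ for every $M > 0$. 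On $A_3$ this yields $\pi(A_3) \leq h/\ln(1+R^2/\delta^2)$, producing the last term $Rh/\ln(1+R^2/\delta^2)$ verbatim. On $A_2$ the same inequality gives $\pi(A_2) \leq h/\ln(1+e^{h/\epsilon})$, and the elementary estimate $\ln(1+e^x) \geq x$ collapses this to $\epsilon$, producing $R\epsilon$. Summing the three contributions recovers the stated inequality.

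The argument is essentially bookkeeping, so the main obstacle is not computational but rather guessing the thresholds: $M_2 = R^2$ is forced by the appearance of $\ln(1+R^2/\delta^2)$ in the last term, while $M_1 = \delta^2 \exp(h/\epsilon)$ is the unique choice that simultaneously yields the first term as a pointwise small-displacement bound and produces the middle term via $\ln(1+e^x) \geq x$. One degenerate regime to check is $M_1 > M_2$, which forces $A_2 = \emptyset$ and lets $A_1$ extend past the cap $R$; there one uses $\min(|x-y|^2, R) \leq R$ on $A_1$, and the resulting contribution $R$ is dominated by $M_1 = \delta^2 \exp(h/\epsilon)$ in the regime $R \geq 1$ where the truncated distance is the natural object, so the inequality still holds.
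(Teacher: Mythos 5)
Your proof is correct and follows essentially the same route as the paper's: take the optimal coupling for $\mathcal{D}_{\delta}$, split into three regions, bound the small-displacement region pointwise, and apply Markov's inequality to the logarithmic cost twice. The only differences worth noting are that your outer threshold $|x-y|^2 > R^2$ makes the $\ln\bigl(1+\tfrac{R^2}{\delta^2}\bigr)$ denominator come out exactly (the paper cuts at $\|x-y\|^2 \leq R$), and your final ``degenerate regime'' paragraph is unnecessary: the pointwise bound $\min(|x-y|^2,R) \leq |x-y|^2 \leq M_1$ on $A_1$ is valid regardless of the order of $M_1$ and $M_2$, so no restriction to $R \geq 1$ is needed and the lemma holds for all $R>0$ as stated.
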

\begin{rmq} \label{remark_w1_again}
	We can define $\widetilde{W}_{1,R}$ in the same way, and a similar proof would also show that
	$$\widetilde{W}_{1,R}(\mu, \nu) \leq \delta\exp\left(\frac{\mathcal{D}_{\delta}(\mu, \nu)}{\epsilon}\right) + R\epsilon + R\frac{\mathcal{D}_{\delta}(\mu, \nu)}{\ln\left(1+\frac{R}{\delta^2}\right)},$$
	which motivates Remark \ref{rm_w1_w2}. 
\end{rmq}
Observe that if $\delta < R$, then by choosing $\epsilon = \frac{\mathcal{D}_{\delta}(\mu, \nu)}{\ln\left(1 + \frac{R}{\delta}\right)}$ in the above lemma, we obtain
\begin{equation} \label{eq_log_wass}
\widetilde{W}^2_{2,R}(\mu, \nu) \leq 2R\left(\delta + \frac{\mathcal{D}_{\delta}(\mu, \nu)}{\ln\left(1+\frac{R}{\delta}\right)}\right).
\end{equation}
Moreover, if both $\mu$ and $\nu$ have tame tails then it can be shown that for $R$ large enough,
$$W_2^2(\mu,\nu) \simeq \widetilde{W}^2_{2,R}(\nu, \mu).$$
This is made rigorous in Lemma \ref{lem_truncwass}, in the appendix.
For the logarithmic transport distance, we will prove:
\begin{lem} \label{lem_finite_time}
	Suppose that \eqref{ass:coeff} and \eqref{ass:density} hold and that $X_0 = Y_0$ almost surely with $Y_0 \sim \nu$. Then, for any $t,\delta > 0$,
 	\begin{align*} 
	\mathcal{D}_\delta(\mu_t, \nu_t)
\leq 2t\left(10\left\|d\nu/d\mu\right\|_{L^{p}(\mu)}\|g\|^2_{L^{2q}(\mu)} + \frac{1}{\delta}\|a-b\|_{L^1(\nu)} + \frac{2}{\delta^2}\|\sqrt{\sigma}-\sqrt{\tau}\|^2_{L^2(\nu)}\right).
\end{align*}
where $q$ is such that $\frac{1}{q} + \frac{1}{p} =1$.
\end{lem}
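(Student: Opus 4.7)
The plan is to implement the Crippa-De Lellis Lagrangian argument in the stochastic setting. I couple the two SDEs with the same Brownian motion and take the given initial condition $X_0 = Y_0 \sim \nu$, so that invariance of $\nu$ for the $Y$-process gives $Y_s \sim \nu$ for every $s \geq 0$. I then apply It\^o's formula to $\phi(X_t - Y_t)$, where $\phi(z) := \ln(1 + |z|^2/\delta^2)$ is the concave logarithmic cost, with explicit derivatives
\[
\nabla\phi(z) = \frac{2z}{\delta^2+|z|^2}, \qquad \nabla^2\phi(z) = \frac{2\, I_d}{\delta^2+|z|^2} - \frac{4\, zz^\top}{(\delta^2+|z|^2)^2},
\]
together with the elementary pointwise bounds $\frac{|z|^2}{\delta^2+|z|^2}\leq 1$ and $\frac{|z|}{\delta^2+|z|^2}\leq \frac{1}{2\delta}$. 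Since $(X_t,Y_t)$ is a coupling of $\mu_t$ and $\nu_t$, one has $\mathcal{D}_\delta(\mu_t,\nu_t) \leq \E[\phi(X_t-Y_t)]$, so everything reduces to bounding this expectation.

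Taking expectations (handling the stochastic integral as a true martingale via a standard localization) and dropping the nonpositive contribution $-\frac{4|z^\top A|^2}{(\delta^2+|z|^2)^2}$ in the Hessian trace yields, for $Z_s := X_s - Y_s$,
\[
\tfrac{d}{ds}\E[\phi(Z_s)] \leq \E\Bigl[\tfrac{2|Z_s|\,|a(X_s)-b(Y_s)|}{\delta^2+|Z_s|^2}\Bigr] + \E\Bigl[\tfrac{2\,\|\sqrt{\sigma}(X_s)-\sqrt{\tau}(Y_s)\|_{HS}^2}{\delta^2+|Z_s|^2}\Bigr].
\]
I then split each norm via the triangle inequality into a \emph{diagonal} part ($a(X_s)-a(Y_s)$ and the corresponding difference for $\sqrt{\tau}$) and a \emph{perturbative} part ($a(Y_s)-b(Y_s)$ and $\sqrt{\sigma}(Y_s)-\sqrt{\tau}(Y_s)$). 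Hypothesis H1 bounds the diagonal parts by $(g(X_s)+g(Y_s))|Z_s|$, and the elementary bounds above absorb the extra $|Z_s|$, leaving pointwise estimates of the form $2(g(X_s)+g(Y_s))$ for the drift contribution and $4(g(X_s)+g(Y_s))^2$ for the Hessian contribution. The perturbative parts, evaluated at $Y_s\sim\nu$, give exactly $\frac{1}{\delta}\|a-b\|_{L^1(\nu)}$ and $\frac{2}{\delta^2}\|\sqrt{\sigma}-\sqrt{\tau}\|_{L^2(\nu)}^2$.

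The remaining step is to transfer expectations against $\mu_s$, the law of $X_s$ started from $\nu$, to norms against the invariant measure $\mu$. Because $\mu$ is invariant for the $X$-process, its Markov semigroup $(P_s)$ is an $L^q(\mu)$-contraction for every $q\geq 1$, so for $F\geq 0$ H\"older gives
\[
\int F\,d\mu_s \;=\; \int P_sF\,d\nu \;=\; \int (P_sF)\tfrac{d\nu}{d\mu}\,d\mu \;\leq\; \|F\|_{L^q(\mu)}\,\|d\nu/d\mu\|_{L^p(\mu)}.
\]
Applied to $F=g^2$, which lies in $L^q(\mu)$ by H1, together with the analogous $\E[g(Y_s)^2] = \int g^2\,d\nu \leq \|g\|_{L^{2q}(\mu)}^2\|d\nu/d\mu\|_{L^p(\mu)}$, this produces the factor $\|g\|_{L^{2q}(\mu)}^2\|d\nu/d\mu\|_{L^p(\mu)}$ in front of the diagonal contribution. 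Integrating the resulting differential inequality from $0$ to $t$, with $\E[\phi(Z_0)]=0$, delivers the claimed bound once constants are tallied.

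The conceptual heart of the argument, and the main point to get right, is the pairing of H1 with the concavity of $\phi$: the factor $\frac{|Z_s|}{\delta^2+|Z_s|^2}$ generated by $\nabla\phi$ precisely cancels the $|Z_s|$ appearing in the Lusin-Lipschitz estimate, which is why a Lusin-type hypothesis suffices where the Lipschitz argument of Theorem \ref{thm_stability_lip} used true Lipschitz control. The more technical obstacle is ensuring that the martingale localization, the semigroup contraction step, and the H\"older transfer are all consistent with the $L^p$/$L^{2q}$ integrability hypotheses on $d\nu/d\mu$ and $g$; in particular, one must verify that all expectations remain finite before passing to the limit in the localization.
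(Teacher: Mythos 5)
Your argument is correct and follows essentially the same route as the paper: It\^o's formula for the logarithmic cost $\ln(1+|Z_s|^2/\delta^2)$ along the synchronously coupled processes (the paper quotes this step from \cite{LL18} as Lemma \ref{lem_li_lou}), the diagonal/perturbative splitting via \eqref{ass:coeff} with the factors $\frac{|Z_s|^2}{\delta^2+|Z_s|^2}\leq 1$ and $\frac{|Z_s|}{\delta^2+|Z_s|^2}\leq\frac{1}{2\delta}$, and a H\"older transfer of $\int g^2\,d\mu_s$ to the invariant measure using \eqref{ass:density}. The only cosmetic difference is in that last step: you contract the semigroup in $L^q(\mu)$ acting on $g^2$ and pair with $d\nu/d\mu$, whereas the paper pairs with $d\mu_s/d\mu$ and invokes the monotonicity of $\|d\mu_s/d\mu\|_{L^p(\mu)}$ in $s$ — these are dual statements of the same contraction property, and your constants tally within the claimed bound.
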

Let us first show how to derive Theorem \ref{thm_main_stability} from Lemma \ref{lem_finite_time}.
\begin{proof}[Proof of Theorem \ref{thm_main_stability}]
	To ease the notation we will denote 
	$$\alpha = 20\left\|d\nu/d\mu\right\|_{L^{p}(\mu)}\|g\|^2_{L^{2q}(\mu)}, \beta = 2\|a-b\|_{L^1(\nu)} + 2\|\sqrt{\sigma}-\sqrt{\tau}\|_{L^2(\nu)}.$$
	We choose $\delta = \beta$ in Lemma \ref{lem_finite_time} and obtain:
	\begin{align*}
	\mathcal{D}_\delta(\mu_t,\nu_t) \leq \left(\alpha + 1\right)t.
	\end{align*}
	Now, combine the above estimate with \eqref{eq_log_wass} to get
	\begin{equation} \label{eq_t_truncated_wass}
	\widetilde{W}^2_{2,R}(\mu_t,\nu_t) \leq 2R\left(\beta + \frac{(\alpha + 1)}{\ln\left(1 + \frac{R}{\beta}\right)}t\right) \leq 2R\frac{(\alpha + 1)}{\ln\left(1 + \frac{R}{\beta}\right)}(t+R).
	\end{equation}
	To see the second inequality note that $\beta \leq R\ln(1+ \frac{R}{\beta})^{-1}.$
	With Assumption \eqref{ass:wass}, we have
	$$\widetilde{W}_{2,R}(\mu_t, \mu) \leq W_2(\mu_t, \mu) \leq C_He^{-t\kappa}W_2(\nu,\mu) \leq C_He^{-t\kappa}\left(\sqrt{\int{|x|^2d\mu} + \int{|x|^2d\nu}}\right).$$
	Observe as well that since $\nu$ is an invariant measure,
	$$\widetilde{W}_{2,R}(\nu_t, \nu) = 0.$$
	We thus get,
	$$\widetilde{W}_{2,R}(\nu_t, \nu) + \widetilde{W}_{2,R}(\mu_t, \mu) \leq C_He^{-t\kappa}\sqrt{m_2^2(\mu)+m_2^2(\nu)}.$$
	Take 
	$$t_0:= \frac{1}{\kappa}\ln\left(\sqrt{m_2^2(\mu)+m_2^2(\nu) \ln\left(1+\frac{R}{\beta}\right)}\right),$$
	for which,
	$$\widetilde{W}_{2,R}(\nu_{t_0}, \nu) + \widetilde{W}_{2,R}(\mu_{t_0}, \mu) \leq \frac{C_H}{\sqrt{\ln\left(1+\frac{R}{\beta}\right)}},$$
	and, by using \eqref{eq_t_truncated_wass},
	$$\widetilde{W}_{2,R}(\mu_{t_0},\nu_{t_0}) \leq \sqrt{\frac{2R(\alpha +1)}{\ln\left(1 + \frac{R}{\beta}\right)}(t_0 + R)}.$$
	To conclude the proof, we use the triangle inequality,
	\begin{align*}
	\widetilde{W}_{2,R}(\mu,\nu)  &\leq \widetilde{W}_{2,R}(\nu_{t_0}, \nu) + \widetilde{W}_{2,R}(\mu_{t_0}, \mu) + \widetilde{W}_2(\mu_{t_0},\nu_{t_0})\\
	&\leq \frac{C_H}{\sqrt{\ln\left(1+\frac{R}{\beta}\right)}} + \sqrt{\frac{2R(\alpha +1)}{\ln\left(1 + \frac{R}{\beta}\right)}(t_0 + R)} \\
	&\leq \frac{1}{\sqrt{\ln\left(1 + \frac{R}{\beta}\right)}}\left(C_H + \sqrt{\frac{2R(\alpha +1)}{\kappa}\ln\left((m_2^2(\mu)+m_2^2(\nu))\ln\left(1+\frac{R}{\beta}\right)\right)+\kappa R}\right).
	\end{align*}
\end{proof}

\subsubsection{Proof of Lemma \ref{lem_finite_time}}
In this section our goal is to bound the logarithmic distance between $X_t$ and $Y_t$ and thus prove Lemma \ref{lem_finite_time}. Towards this, we let $Z_t = X_t - Y_t$. A straightforward application of It\^o's formula gives the following result, whose proof may be found in \cite[Section 4.1]{LL18}.
\begin{lem} \label{lem_li_lou}
\begin{align*}\mathcal{D}_\delta(\mu_t, \nu_t) \leq \mathcal{D}_\delta(\mu_0, \nu_0) &+ 2\int_0^t{\mathbb{E}\left[\frac{\langle Z_s, a(X_s) - b(Y_s)\rangle}{|Z_s|^2 + \delta^2}\right]ds} \\
&+ 2\int_0^t{\mathbb{E}\left[\frac{||\sqrt{\sigma}(X_s) - \sqrt{\tau}(Y_s)||^2}{|Z_s|^2 + \delta^2}\right]ds}.
\end{align*}
\end{lem}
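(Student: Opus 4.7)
The plan is to realize $\mathcal{D}_\delta(\mu_t,\nu_t)$ via a specific (suboptimal) coupling built from the synchronous coupling of the two SDEs, and then use It\^o's formula applied to the cost function $f(z) := \ln(1+|z|^2/\delta^2)$ evaluated along $Z_s = X_s - Y_s$.

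First, I would fix an optimal coupling $\pi_0$ of the initial laws $\mu_0$ and $\nu_0$ for the cost $\ln(1+|\cdot|^2/\delta^2)$ (so that $\mathbb{E}[f(Z_0)]=\mathcal{D}_\delta(\mu_0,\nu_0)$), and then drive $X_t$ and $Y_t$ by the same Brownian motion $B_t$. The SDEs give
\[
 dZ_s = \bigl(a(X_s)-b(Y_s)\bigr)\,ds + \sqrt{2}\bigl(\sqrt{\sigma}(X_s)-\sqrt{\tau}(Y_s)\bigr)\,dB_s,
\]
so that the law of $(X_t,Y_t)$ is a (generally suboptimal) coupling of $\mu_t$ and $\nu_t$, yielding the key inequality $\mathcal{D}_\delta(\mu_t,\nu_t)\le \mathbb{E}[f(Z_t)]$.

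Next I would compute the It\^o expansion. A direct calculation gives
\[
 \nabla f(z) = \frac{2z}{\delta^2+|z|^2}, \qquad \Hess f(z) = \frac{2}{\delta^2+|z|^2}\,\mathrm{I_d} - \frac{4\,zz^{T}}{(\delta^2+|z|^2)^2}.
\]
Writing $A_s := \sqrt{\sigma}(X_s)-\sqrt{\tau}(Y_s)$, the quadratic variation is $d[Z]_s = 2 A_s A_s^{T}\,ds$. Applying It\^o and taking $\tfrac12\Tr(\Hess f\cdot d[Z]_s)$, the negative term $-\tfrac{4}{(\delta^2+|Z_s|^2)^2}|A_s^{T}Z_s|^2$ can be discarded to obtain the upper bound
\[
 df(Z_s) \le \frac{2\langle Z_s, a(X_s)-b(Y_s)\rangle}{\delta^2+|Z_s|^2}\,ds + \frac{2\|A_s\|_{HS}^2}{\delta^2+|Z_s|^2}\,ds + dM_s,
\]
where $dM_s = \sqrt{2}\,\nabla f(Z_s)^{T} A_s\,dB_s$ is the local martingale part.

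Then I would integrate on $[0,t]$ and take expectations. The integrand of $M$ is bounded (indeed $|\nabla f(z)|\le 2/\delta$ and the coefficients are at worst of linear growth in the relevant $L^2$ sense along the flow), so a standard localization argument makes $\mathbb{E}[M_t]=0$. Combining this with $\mathbb{E}[f(Z_0)]=\mathcal{D}_\delta(\mu_0,\nu_0)$ and $\mathcal{D}_\delta(\mu_t,\nu_t)\le \mathbb{E}[f(Z_t)]$ gives exactly the stated inequality, with the correct factors of $2$ in front of both integrals arising from the $\sqrt{2}$ diffusion normalization.

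The main subtlety, rather than the calculation itself, is justifying the It\^o expansion and the vanishing of the martingale under only Assumption \eqref{ass:coeff}: the coefficients $a,b,\sqrt{\sigma},\sqrt{\tau}$ need not be globally Lipschitz, so strictly speaking one should either invoke a strong-solution theory for SDEs with Sobolev coefficients (as in the Ambrosio/DiPerna–Lions framework, see the well-posedness references cited in the introduction, or the explicit statement in \cite[Section 4.1]{LL18}) or argue by approximation with smooth coefficients and pass to the limit in the above inequality using the continuity of $f$ and the boundedness $0\le f\le \ln(1+R^2/\delta^2)$ on sets $\{|Z|\le R\}$. Since the paper defers these points to \cite{LL18}, I would cite that reference for the rigorous It\^o step and focus the write-up on the computation above.
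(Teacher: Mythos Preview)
Your proposal is correct and is precisely the approach the paper indicates: the paper does not spell out the argument but simply states that the lemma follows from a straightforward application of It\^o's formula, referring to \cite[Section 4.1]{LL18}. Your computation of $\nabla f$, $\Hess f$, the discarding of the nonpositive term $-4|A_s^{T}Z_s|^2/(\delta^2+|Z_s|^2)^2$, and the use of the synchronous coupling with optimally coupled initial data exactly reconstruct that argument, and your remark on the need to justify the vanishing martingale term via approximation or well-posedness results is apt.
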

With the above inequality we may then prove.
\begin{lem} \label{lem_li_lou_integral}
	Let $t \geq 0$. Then,
	$$\mathcal{D}_\delta(\mu_t, \nu_t) \leq \mathcal{D}_\delta(\mu_0, \nu_0) + 2\int\limits_0^t\left(5\left(\|g\|^2_{L^2(\mu_s)} + \|g\|^2_{L^2(\nu_s)}\right) + \frac{1}{\delta}\|a-b\|_{L^1(\nu_s)} + \frac{2}{\delta^2}\|\sqrt{\sigma}-\sqrt{\tau}\|^2_{L^2(\nu_s)}\right)ds.$$
\end{lem}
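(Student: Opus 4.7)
My plan is to start directly from Lemma~\ref{lem_li_lou} and bound the two expectation integrands in terms of the quantities appearing in the desired inequality. In both cases the strategy is the same: add and subtract the drift/diffusion coefficient of the $X$-process evaluated at $Y_s$, so that the Lusin--Lipschitz-type assumption \eqref{ass:coeff} can be applied to the first piece, and the remaining difference of coefficients is evaluated at the single point $Y_s$, which gives a natural $L^p(\nu_s)$ norm.

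For the drift integrand I would write $a(X_s)-b(Y_s) = (a(X_s)-a(Y_s)) + (a(Y_s)-b(Y_s))$. By \eqref{ass:coeff} the first piece satisfies
\[
\frac{|\langle Z_s, a(X_s)-a(Y_s)\rangle|}{|Z_s|^2+\delta^2}
\le \frac{(g(X_s)+g(Y_s))|Z_s|^2}{|Z_s|^2+\delta^2}
\le g(X_s)+g(Y_s),
\]
while the second piece is controlled by the elementary inequality $|Z_s|/(|Z_s|^2+\delta^2)\le 1/\delta$, giving a contribution bounded by $\delta^{-1}|a(Y_s)-b(Y_s)|$. Taking expectations yields
\[
\mathbb{E}\!\left[\frac{\langle Z_s, a(X_s)-b(Y_s)\rangle}{|Z_s|^2+\delta^2}\right]
\le \|g\|_{L^1(\mu_s)}+\|g\|_{L^1(\nu_s)}+\frac{1}{\delta}\|a-b\|_{L^1(\nu_s)},
\]
and since $g\ge 1$ pointwise by the normalization in \eqref{ass:coeff}, we have $\|g\|_{L^1}\le\|g\|_{L^2}^2$, upgrading the $L^1$ norms of $g$ to the $L^2$ norms that appear in the statement.

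The diffusion integrand is handled analogously, decomposing $\sqrt{\tau}(X_s)-\sqrt{\sigma}(Y_s) = (\sqrt{\tau}(X_s)-\sqrt{\tau}(Y_s)) + (\sqrt{\tau}(Y_s)-\sqrt{\sigma}(Y_s))$ and using $\|u+v\|^2\le 2\|u\|^2+2\|v\|^2$. The first squared term, after dividing by $|Z_s|^2+\delta^2$, is at most $(g(X_s)+g(Y_s))^2\le 2g(X_s)^2+2g(Y_s)^2$ by \eqref{ass:coeff}; the second is bounded pointwise by $\delta^{-2}\|\sqrt{\tau}(Y_s)-\sqrt{\sigma}(Y_s)\|^2$. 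Taking expectations gives
\[
\mathbb{E}\!\left[\frac{\|\sqrt{\tau}(X_s)-\sqrt{\sigma}(Y_s)\|^2}{|Z_s|^2+\delta^2}\right]
\le 4\bigl(\|g\|_{L^2(\mu_s)}^2+\|g\|_{L^2(\nu_s)}^2\bigr)+\frac{2}{\delta^2}\|\sqrt{\sigma}-\sqrt{\tau}\|_{L^2(\nu_s)}^2.
\]

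Summing the two estimates (coefficient $1$ from the drift contribution and coefficient $4$ from the diffusion contribution accounts for the constant $5$ in front of the $\|g\|^2$ terms), plugging them into Lemma~\ref{lem_li_lou}, and integrating in $s$ gives the claimed inequality. I do not expect a genuine obstacle here; the only subtle point is the bookkeeping of which variable ($X_s$ or $Y_s$) is the reference in each decomposition, since the Lusin--Lipschitz assumption is formulated for the coefficients $a,\sqrt{\tau}$ of the $X$-process, and evaluating the leftover terms at $Y_s$ is precisely what produces the $L^1(\nu_s)$ and $L^2(\nu_s)$ norms appearing in the conclusion.
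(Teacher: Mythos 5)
Your proposal is correct and follows essentially the same route as the paper: decompose the drift and diffusion differences by adding and subtracting $a(Y_s)$ and $\sqrt{\tau}(Y_s)$, apply Assumption \eqref{ass:coeff} to the increments of the $X$-coefficients, bound the leftover terms by $\delta^{-1}$ and $\delta^{-2}$ factors to get the $L^1(\nu_s)$ and $L^2(\nu_s)$ norms, and use $g\ge 1$ to absorb the $L^1$ norms of $g$ into the squared $L^2$ norms, yielding the constant $5$. The only (immaterial) difference is that the paper first applies Cauchy--Schwarz to reduce the drift term to $|a(X_s)-b(Y_s)|/\sqrt{|Z_s|^2+\delta^2}$ before splitting, whereas you split first and bound $|Z_s|^2/(|Z_s|^2+\delta^2)$ and $|Z_s|/(|Z_s|^2+\delta^2)$ directly.
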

\begin{proof}
We have
\begin{align*}
\mathbb{E}\left[\frac{\langle Z_s, a(X_s) - b(Y_s)\rangle}{|Z_s|^2 + \delta^2}\right] &\leq \mathbb{E}\left[\frac{|a(X_s) - b(Y_s)|}{\sqrt{|Z_s|^2 + \delta^2}}\right] \\
&\leq \mathbb{E}\left[\frac{|a(X_s) - a(Y_s)|}{\sqrt{|Z_s|^2 + \delta^2}}\right] + \mathbb{E}\left[\frac{|a(Y_s) - b(Y_s)|}{\sqrt{|Z_s|^2 + \delta^2}}\right]. \\
\end{align*}
Using Assumption \eqref{ass:coeff}, we get
\begin{align*}
\mathbb{E}\left[\frac{|a(X_s) - a(Y_s)|}{\sqrt{|Z_s|^2 + \delta^2}}\right] &\leq \E\left[g(X_s)+ g(Y_s)\right]=\|g\|_{L^1(\mu_s)} + \|g\|_{L^1(\nu_s)}.\\
\end{align*}
We also have,
\begin{align*}
\mathbb{E}\left[\frac{|a(Y_s) - b(Y_s)|}{\sqrt{|Z_s|^2 + \delta^2}}\right] \leq \frac{1}{\delta}\left\|a-b\right\|_{L^1(\nu_s)}.
\end{align*}
So, 
$$\mathbb{E}\left[\frac{\langle Z_s, a(X_s) - b(Y_s)\rangle}{|Z_s|^2 + \delta^2}\right] \leq \|g\|_{L^1(\mu_s)} + \|g\|_{L^1(\nu_s)} + \frac{1}{\delta}\left\|a-b\right\|_{L^1(\nu_s)}.$$
Similar calculations yield,
$$\mathbb{E}\left[\frac{||\sqrt{\sigma}(X_s) - \sqrt{\tau}(Y_s)||^2}{|Z_s|^2 + \delta^2}\right] \leq 4\|g\|^2_{L^2(\mu_s)} + 4\|g\|^2_{L^2(\nu_s)} + \frac{2}{\delta^2}\left\|\sqrt{\sigma}-\sqrt{\tau}\right\|^2_{L^2(\nu_s)}.$$
As it is fine to assume $\|g\|^2_{L^2(\rho)} \geq \|g\|_{L^1(\rho)} \geq 1$ for any probability measure $\rho$ we consider, since we assumed for convenience that $g \geq 1$, we now plug the above displays into Lemma \ref{lem_li_lou}.
\end{proof}
Lemma \ref{lem_finite_time} is now a consequence of the previous lemma. 
\begin{proof} [Proof of Lemma \ref{lem_finite_time}]
 	We start from Lemma \ref{lem_li_lou_integral}. Since $\mu_0 = \nu_0 = \nu$, and $\nu$ is the invariant measure of the evolution equation for $(Y_t)$, we have
 	\begin{equation} \label{1st_eq_lp_thm}
 	\mathcal{D}_\delta(\mu_t, \nu_t) \leq 2\int\limits_0^t\left(5\left(\|g\|^2_{L^2(\mu_s)} + \|g\|^2_{L^2(\nu)}\right) + \frac{1}{\delta}\|a-b\|_{L^1(\nu)} + \frac{2}{\delta^2}\|\sqrt{\sigma}-\sqrt{\tau}\|^2_{L^2(\nu)}\right)ds.
 	\end{equation}
 	Let $q = \left(1 - \frac{1}{p}\right)^{-1}$. By H\"older's inequality,
 	\begin{align*}
 	\|g\|^2_{L^2(\nu)} &\leq \|g\|^2_{L^{2q}(\mu)}\left\|\frac{d\nu}{d\mu}\right\|_{L^{p}(\mu)}.
 	\end{align*}\
	Also, 
 	 \begin{align*}
 	\|g\|^2_{L^2(\mu_s)} &\leq \|g\|^2_{L^{2q}(\mu)}\left\|\frac{d\mu_s}{d\mu}\right\|_{L^{p}(\mu)} \leq \|g\|^2_{L^{2q}(\mu)}\left\|\frac{d\nu}{d\mu}\right\|_{L^{p}(\mu)},
 	\end{align*}
 	where in the second inequality we have used that $\left\|\frac{d\mu_s}{d\mu}\right\|_{L^{p}(\mu)}$ is monotonic decreasing in $s$.
 	We plug the above displays into \eqref{1st_eq_lp_thm}, to obtain
 	\begin{align*} 
	\mathcal{D}_\delta(\mu_t, \nu_t)
	\leq 2t\left(10\left\|d\nu/d\mu\right\|_{L^{p}(\mu)}\|g\|^2_{L^{2q}(\mu)} + \frac{1}{\delta}\|a-b\|_{L^1(\nu)} + \frac{2}{\delta^2}\|\sqrt{\sigma}-\sqrt{\tau}\|^2_{L^2(\nu)}\right).
	\end{align*}
 	which concludes the proof. 
\end{proof}

\section{Proofs of the applications to Stein kernels} \label{sec_stein}

In this section we fix a measure $\mu$ on $\R^d$, with Stein kernel $\tau_\mu$, constructed as in \eqref{eq_moment_kernel}. For now, we make the assumption that $\tau_\mu$ is positive definite and uniformly bounded from below. To apply our result, we must first construct an It\^o diffusion process with $\mu$ as its unique invariant measure. Define the process $X_t$ to satisfy the following SDE:
\begin{equation} \label{sde_sk}
dX_t = -X_tdt + \sqrt{2\tau_\mu(X_t)}dB_t.
\end{equation}
\begin{lem} \label{lem_invariant_stein}
	$\mu$ is the unique invariant measure of the process $X_t$.
\end{lem}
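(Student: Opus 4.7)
The plan has two parts: first, verify invariance of $\mu$ directly by checking $\int L f\, d\mu = 0$ for smooth compactly supported test functions $f$ using the defining property of a Stein kernel; second, extract uniqueness from the non-degeneracy of $\tau_\mu$ and the dissipativity of the drift via standard ergodic theory for It\^o diffusions, as already referenced in the introduction through \cite{BD17}.

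For invariance, I would write down the generator of \eqref{sde_sk}, acting on $f \in C_c^\infty(\R^d)$ as
$$L f(x) = -\langle x, \nabla f(x)\rangle + \Tr(\tau_\mu(x)\, \Hess f(x)).$$
The Stein kernel identity \eqref{eq_stein_kernel} applied with $\tau = \tau_\mu$ reads exactly $\int \langle x, \nabla f\rangle\, d\mu = \int \Tr(\tau_\mu \Hess f)\, d\mu$, so subtracting gives $\int L f\, d\mu = 0$ for every $f \in C_c^\infty(\R^d)$. Since $C_c^\infty(\R^d)$ is a core for the generator under the standing hypotheses on $\tau_\mu$, this shows that $\mu$ is a stationary measure for the semigroup associated with \eqref{sde_sk}.

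For uniqueness, my plan is to invoke the classical existence/uniqueness theory for invariant measures of non-degenerate diffusions. The assumption that $\tau_\mu$ is positive definite and uniformly bounded from below provides uniform ellipticity, hence irreducibility and the strong Feller property on $\R^d$. The linear drift $-x$ supplies the Lyapunov function $V(x) = |x|^2$: a direct computation using $L$ and the uniform lower and (local) upper bounds on $\tau_\mu$ yields an inequality of the form $L V(x) \leq -2|x|^2 + C$ outside a compact set, which is the confinement condition needed to apply a Has'minskii/Meyn--Tweedie type theorem to conclude existence of a unique invariant probability measure. Combined with the first part, this unique measure must be $\mu$.

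The main obstacle I expect is not in either of these two computations separately, but in checking that the abstract ergodic results apply to the specific $\tau_\mu$ at hand, which at this stage is only known to be Borel and positive definite with a uniform lower bound. If $\tau_\mu$ happens to be sufficiently regular (as will be the case under the two-sided Hessian bounds of Theorem \ref{thm_stein}, where the moment map $\varphi$ inherits quantitative regularity from $\mu$), one can quote the results of \cite{BD17} directly; otherwise one should either approximate $\tau_\mu$ by smooth coefficients and check stability of the invariance identity in the limit, or work at the level of the martingale problem, where uniqueness of the stationary measure holds under weaker regularity assumptions.
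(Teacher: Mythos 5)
Your proposal is correct and follows essentially the same route as the paper: invariance is checked by observing that the Stein kernel identity \eqref{eq_stein_kernel} gives $\E_\mu[Lf]=0$ for the generator of \eqref{sde_sk}, and uniqueness is deduced from the uniform lower bound on $\tau_\mu$ via the standard theory cited in \cite{BD17}. Your additional remarks on cores, Lyapunov functions and regularity of $\tau_\mu$ flesh out details the paper leaves implicit, but they do not constitute a different argument.
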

\begin{proof}
	Let $L$ be the infinitesimal generator of of $(X_t)$. That is, for a twice differentiable test function,
	$$Lf(x) = \langle-x,\nabla f(x)\rangle +\langle \tau_\mu(x),\Hess f(x)\rangle_{HS}.$$
	From the definition of the Stein kernel \eqref{eq_stein_kernel}, we have
	$$\E_\mu\left[L f(x)\right] = 0,$$
	for any such test function. We conclude that $\mu$ is the invariant measure of the process. Uniqueness follows, since $\tau_\mu$ is uniformly bounded from below (\cite{BD17}).
\end{proof}
Before proving Theorem \ref{thm_stein} we collect several facts concerning this process.

\subsection{Lusin-Lipschitz Property for moment maps}
We would now like to claim that the kernel $\tau_\mu$ exhibits Lipschitz-like properties as in Assumption \eqref{ass:coeff}.
For this to hold we restrict our attention to a more regular class of measures. Henceforth, we assume that $\mu = e^{-V(x)}dx$ is an isotropic log-concave measure whose support equals $\R^d$ and that there exists a constant $\alpha > 0$, such that
\begin{equation} \label{eq_bound_condition}
\alpha\mathrm{I_d} \leq \nabla^2V  \leq \frac{1}{\alpha}\mathrm{I_d}.
\end{equation}
In some sense, this assumption can be viewed as restricting ourselves to measures that are not too far from a Gaussian distribution. Under this assumption the main result of this section is that Stein kernels satisfy the Lusin-Lipschitz property that we need in order to apply Theorem \ref{thm_main_stability}. That is: 
\begin{lem} \label{lem_lusin_Lipschitz}
Let $\mu$ be an isotropic log-concave measure on $\R^d$ satisfying \eqref{eq_bound_condition} and let $\tau_\mu$ be its Stein kernel constructed from the moment map. Then, there exists a function $g: \R^d \to \R$ such that for almost every $x,y \in \R^d$:
$$\left\|\sqrt{\tau_\mu(x)} - \sqrt{\tau_\mu(y)}\right\|\leq (g(x)+g(y))\|x-y\|,$$
and, 
$$\|g\|_{L^2(\mu)}\leq Cd^{3/2}\alpha^{-1},$$
where $C > 0$ is a universal constant.
Moreover, there exists a constant $c$ such that if $\mu$ is radially symmetric and has full support then we also have for $d > c$
$$\|g\|_{L^4(\mu)} < Cd^{7/4} \alpha^{-8}.$$
\end{lem}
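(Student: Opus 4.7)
The plan is to apply Proposition~\ref{prop_RCD} (the Ambrosio-Bru\'e-Trevisan Lusin-Lipschitz property for log-concave reference measures) entry-by-entry to the matrix-valued map $\sqrt{\tau_\mu}$. Since $\mu$ itself is log-concave and the target can be viewed as a Hilbert space of symmetric matrices, this reduces the lemma to bounding $\|\nabla\sqrt{\tau_\mu}\|_{L^p(\mu)}$ by the claimed quantity, with $p=2$ in the general case and $p=4$ in the radially symmetric case. Note that the constant $C_p$ produced by Proposition~\ref{prop_RCD} is universal, so only the Sobolev seminorm on the right has to be tracked.

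To estimate this seminorm, I would change variables along the moment map. Writing $\sqrt{\tau_\mu}(x)=\sqrt{\nabla^2\varphi}(\nabla\varphi^{-1}(x))$ and applying the chain rule yields
\begin{equation*}
\nabla_x\sqrt{\tau_\mu}(x)=\nabla_y\bigl(\sqrt{\nabla^2\varphi}\bigr)(y)\,(\nabla^2\varphi(y))^{-1},\qquad y=\nabla\varphi^{-1}(x),
\end{equation*}
and pushing forward converts $\|\nabla\sqrt{\tau_\mu}\|_{L^p(\mu)}^p$ into an integral against the source measure $e^{-\varphi}$ involving $(\nabla^2\varphi)^{-1}$ and the differential of $\sqrt{\nabla^2\varphi}$. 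The latter is itself controlled by $\nabla^3\varphi$ together with $(\nabla^2\varphi)^{-1/2}$, via the Sylvester equation $dB\cdot B+B\cdot dB=dA$ satisfied by $B=\sqrt{A}$. All told, the whole bound reduces to $L^p(e^{-\varphi})$ estimates on $\nabla^2\varphi$, its inverse, and $\nabla^3\varphi$.

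The main technical step, and the one I expect to be the obstacle, is obtaining those $L^p(e^{-\varphi})$ bounds on $\nabla^2\varphi$ and $\nabla^3\varphi$ from assumption~\eqref{eq_bound_condition}. The strategy is to differentiate the Monge-Amp\`ere identity $V(\nabla\varphi)=\varphi+\log\det(\nabla^2\varphi)$ once to produce an elliptic identity for $\nabla^2\varphi$, and a second time for $\nabla^3\varphi$. The two-sided bound $\alpha I\leq\nabla^2V\leq\alpha^{-1}I$ translates, in the spirit of Caffarelli-Kolesnikov contraction/expansion results for moment maps, into pointwise operator bounds on $\nabla^2\varphi$ scaling like $\alpha^{\pm 1}$, and $L^2(e^{-\varphi})$ control of $\nabla^3\varphi$ follows by testing the differentiated Monge-Amp\`ere equation against $e^{-\varphi}$ and integrating by parts, using the log-concavity of $\varphi$ to absorb the zero-order terms. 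Summing Hilbert-Schmidt contributions over matrix entries then produces the factor $d^{3/2}$ in the $L^2$ bound, together with the $\alpha^{-1}$ scaling.

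For the radially symmetric case with full support, $\varphi$ is also radial, so $\nabla^2\varphi(y)$ simultaneously diagonalizes in the frame $(y/|y|,y^\perp)$ with only two distinct eigenvalues: $\varphi''(r)$ in the radial direction and $\varphi'(r)/r$ in each of the $d-1$ tangential directions. The Monge-Amp\`ere PDE reduces to a one-dimensional ODE in $r$, which can be iterated to yield pointwise two-sided bounds on $\varphi''$, $\varphi'/r$ and their first two derivatives in terms of $\alpha$ and $r$. Plugging these into the chain-rule formula above, the dominant contribution comes from the tangential block, which carries an extra $\sqrt{d-1}$ from its Hilbert-Schmidt norm; pushing the computation to the $L^4$ level and iterating the ODE estimates accumulates the further $\alpha^{-1}$ factors, giving the claimed $d^{7/4}\alpha^{-8}$ bound once $d$ exceeds a universal threshold ensuring concentration of $|y|$ under $e^{-\varphi}$.
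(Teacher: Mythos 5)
Your overall architecture coincides with the paper's: apply Proposition \ref{prop_RCD} with $\mu$ as reference measure, change variables along $\nabla\varphi$ so that everything reduces to weighted bounds on third derivatives of the moment map, use the two-sided Hessian bounds $\alpha\Id\le\nabla^2\varphi\le\alpha^{-1}\Id$, and handle the square root through the spectral lower bound $\tau_\mu\ge\alpha\Id$ (the paper does this after the fact, via the pointwise estimate $\|\sqrt{A}-\sqrt{B}\|\le\alpha^{-1/2}\|A-B\|$ for $A,B\ge\alpha\Id$, rather than differentiating $\sqrt{\tau_\mu}$ with a Sylvester equation; the two routes are equivalent up to constants). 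The genuine gap is in the step you yourself flag as the obstacle: the weighted $L^2$ (and $L^4$) bounds on $\nabla^3\varphi$. ``Testing the differentiated Monge--Amp\`ere equation against $e^{-\varphi}$ and integrating by parts, using log-concavity of $\varphi$'' does not close: differentiating the equation brings in $\nabla V(\nabla\varphi)$, fourth derivatives, and inverse-Hessian weights, and the dimensional bookkeeping that yields $d^{3/2}\alpha^{-1}$ is precisely the content of Proposition \ref{prop_3rd_order_bnds}. That proposition rests on two nontrivial imported inputs which your sketch does not identify: Klartag's moment bounds $\int(\partial^2_{ee}\varphi)^p e^{-\varphi}dx\le 8^pp^{2p}$ and the dimension-free Klartag--Kolesnikov estimate $\int\langle(\nabla^2\varphi)^{-1}\nabla\partial^2_{ee}\varphi,\nabla\partial^2_{ee}\varphi\rangle e^{-\varphi}dx\le C$, combined with $\nabla^2\varphi\le\alpha^{-1}\Id$ for the pure third derivatives, plus a separate integration-by-parts chain for the mixed derivatives $\partial^3_{ijk}\varphi$, which is where the extra factor of $d$ enters.

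The radially symmetric case is where your plan would actually fail rather than merely be incomplete. Iterating the radial Monge--Amp\`ere ODE does not give pointwise two-sided bounds on the first two derivatives of $\varphi''$ and $\varphi'/r$: differentiating the equation only controls a trace-type combination $\Tr\left((\nabla^2\varphi)^{-1}\nabla^2\partial_i\varphi\right)$, weighted by the inverse Hessian and contaminated by $\nabla V(\nabla\varphi)$, and no pointwise bound on the third derivative in terms of $\alpha$ and $r$ follows. The paper instead bounds this trace in $L^4(e^{-\varphi})$, uses Sherman--Morrison and the explicit radial formulas to extract the individual $\partial^3_{ijk}\varphi$, and along the way must control negative moments such as $\int\|x\|^{-4}e^{-\varphi}dx$ and $\int\|x_{\sim i}\|^{-4}e^{-\varphi}dx$, produced by the $\|x\|^{-2}$-type weights in the radial derivative identities. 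This is exactly where the hypothesis $d>c$ is used, through Paouris's reverse H\"older inequality for negative moments of radially symmetric log-concave measures; ``concentration of $|y|$ under $e^{-\varphi}$'' is not the relevant tool, and your sketch has no mechanism for the singularity at the origin.
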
	
In the sequel we will use the following notation, for $v \in \R^d$, $\partial_{v} \varphi$ is the directional derivative of $\varphi$ along $v$. Repeated derivations will be denoted as $\partial^2_{uv}\varphi, \partial^3_{uvw}\varphi,$ etc. If $e_i$, for $i=1,\dots,d$, is a standard unit vector, we will abbreviate $\partial_i \varphi = \partial_{e_i}\varphi$.

Recall that $\tau_\mu = \nabla^2 \varphi (\nabla \varphi ^{-1})$, where $\nabla\varphi$ pushes the measure $e^{-\varphi}dx$ unto $\mu$. Thus, keeping in mind Proposition \ref{prop_RCD}, our first objective is to show $\partial^3_{ijk}\varphi \in W^{1,2}(\mu),$ for every $i,j,k = 1,\dots,d$. This will be a consequence of the following result:

\begin{prop}[Third-order regularity bounds on moment maps]
	\label{prop_3rd_order_bnds}
	Assume that $\mu$ is isotropic and that $\nabla^2 V \geq \alpha\Id$. Then, for $i,j = 1,...,d$ and $j \neq i$, 
	\begin{enumerate}
		\item $\int{|\nabla \partial_{ii}^2 \varphi|^2e^{-\varphi}dx} \leq  C\alpha^{-1}$.
		
		\item $\int{|\nabla \partial_{ij}^2 \varphi|^2e^{-\varphi}dx} \leq C(d+\alpha^{-1}).$
	\end{enumerate}
		Here $C$ is a dimension-free constant, independent of $\mu$. 
\end{prop}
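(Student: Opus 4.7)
The strategy is to differentiate the Monge--Amp\`ere equation $V(\nabla\varphi)=\varphi+\log\det\Hess\varphi$ satisfied by the moment map twice and exploit a self-adjoint structure that emerges. Setting $H=\Hess\varphi$, one differentiation in direction $e_i$ combined with Jacobi's formula $\partial_i\log\det H=\Tr(H^{-1}\partial_iH)$ gives the striking eigenfunction identity $\mathcal L\varphi_i=-\varphi_i$, where
$$\mathcal Lf:=\Tr\bigl(H^{-1}\Hess f\bigr)-\langle\nabla V(\nabla\varphi),\nabla f\rangle.$$
A direct integration by parts, in which the divergence term $\sum_l\partial_l(H^{-1})_{kl}$ is computed via Jacobi and then eliminated using the once-differentiated Monge--Amp\`ere equation, shows that $\mathcal L$ is self-adjoint in $L^2(e^{-\varphi}dy)$ with Dirichlet form $\mathcal E(f)=\int\langle H^{-1}\nabla f,\nabla f\rangle\,e^{-\varphi}dy$---the Brascamp--Lieb quadratic form for the log-concave source measure.

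Differentiating once more in direction $e_j$ produces the master identity
$$\mathcal L\varphi_{ij}=-\varphi_{ij}+\Tr\bigl(H^{-1}\Hess\varphi_i\,H^{-1}\Hess\varphi_j\bigr)+\langle\Hess V(\nabla\varphi)\nabla\varphi_i,\nabla\varphi_j\rangle.$$
For the diagonal part (1), I would test this against $\varphi_{ii}$ and integrate against $e^{-\varphi}dy$: self-adjointness turns the left-hand side into $-\mathcal E(\varphi_{ii})$, while both quadratic remainders on the right are pointwise nonnegative (the first equals $\|H^{-1/2}(\partial_iH)H^{-1/2}\|_{HS}^2\geq 0$, the second uses positivity of $\Hess V$) and convexity of $\varphi$ gives $\varphi_{ii}\ge 0$. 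Collecting all nonnegative contributions on one side yields $\mathcal E(\varphi_{ii})\le\int\varphi_{ii}^{\,2}\,e^{-\varphi}dy=\int(\tau_\mu)_{ii}^{\,2}\,d\mu$, where the last equality is the pushforward identity $\varphi_{ij}(y)=(\tau_\mu)_{ij}(\nabla\varphi(y))$. This is bounded by $C\alpha^{-1}$ by combining the Stein identity $\int(\tau_\mu)_{ii}d\mu=1$ (from isotropy tested against $f(x)=\tfrac12 x_i^2$) with a pointwise bound $\tau_\mu\preceq\alpha^{-1}\Id$ following from the assumed curvature bounds on $\Hess V$. To convert $\mathcal E(\varphi_{ii})$ to the unweighted $\int|\nabla\varphi_{ii}|^2\,e^{-\varphi}dy$ of the statement, I transfer to the $\mu$-side: differentiating $\varphi_{ii}\circ\nabla\psi=(\tau_\mu)_{ii}$ gives $|\nabla\varphi_{ii}(y)|^2=\langle\nabla(\tau_\mu)_{ii},\tau_\mu^{\,2}\nabla(\tau_\mu)_{ii}\rangle$, and the bound $\tau_\mu\preceq\alpha^{-1}\Id$ together with a parallel self-adjointness computation for the Stein SDE generator $\tilde L f=\langle-x,\nabla f\rangle+\Tr(\tau_\mu\Hess f)$---its $\mu$-reversibility following from the dual Monge--Amp\`ere relation $\varphi(\nabla\psi)=V+\log\det\nabla^2\psi$---closes the loop.

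For the off-diagonal case (2), the same master identity holds but the quadratic remainders no longer carry a fixed sign; the strategy is to sum over $j\neq i$, control cross-terms by Cauchy--Schwarz against the diagonal bounds already obtained, and absorb the extra factor of $d$ from the summation, which explains the additive $d$ in $C(d+\alpha^{-1})$. The main obstacle I anticipate is precisely this weighted-to-unweighted conversion together with the sign management in the off-diagonal case: neither can be handled by a pointwise inequality on $H$ (whose operator norm is unbounded for moment maps), both require companion computations on the target side using $\tilde L$, and the bookkeeping between the source and target sides of the moment map is where most of the technical work will lie.
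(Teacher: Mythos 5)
Your structural setup is correct: the operator $\mathcal Lf=\Tr\bigl(H^{-1}\Hess f\bigr)-\langle\nabla V(\nabla\varphi),\nabla f\rangle$ is indeed self-adjoint in $L^2(e^{-\varphi})$ with Dirichlet form $\mathcal E(f)=\int\langle H^{-1}\nabla f,\nabla f\rangle e^{-\varphi}dy$, the identity $\mathcal L\varphi_i=-\varphi_i$ and your twice-differentiated master identity are right, and this is essentially the Klartag--Kolesnikov machinery that the paper does not reprove but imports as Proposition \ref{prop_bnds_kk} (items 1--2, from \cite{K14, KK15}). The paper then gets part 1 in one line: $\int\langle H^{-1}\nabla\partial^2_{ii}\varphi,\nabla\partial^2_{ii}\varphi\rangle e^{-\varphi}dx\leq C$ with a dimension-free $C$ (item 2), combined with the pointwise bound $\Hess\varphi\leq\alpha^{-1}\Id$ (item 3). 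Two remarks on your part 1. First, the weighted-to-unweighted conversion is exactly this pointwise bound $H\leq\alpha^{-1}\Id$ -- which you yourself invoke as $\tau_\mu\preceq\alpha^{-1}\Id$ -- so the detour through reversibility of the Stein generator $\tilde L$ is unnecessary, and your concern that $H$ has unbounded operator norm contradicts your own use of that bound. Second, your estimate $\mathcal E(\varphi_{ii})\leq\int\varphi_{ii}^2e^{-\varphi}dy\leq\|\varphi_{ii}\|_\infty\int\varphi_{ii}e^{-\varphi}dy\leq\alpha^{-1}$ costs a factor $\alpha^{-1}$, so after conversion you obtain $C\alpha^{-2}$, not the stated $C\alpha^{-1}$; recovering the stated rate requires the dimension-free moment bound on $\partial^2_{ee}\varphi$ (item 1 of Proposition \ref{prop_bnds_kk}), which your argument neither cites nor reproves.

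The genuine gap is part 2. Testing the master identity against $\varphi_{ij}$ gives $\mathcal E(\varphi_{ij})=\int\varphi_{ij}^2e^{-\varphi}dy-\int\varphi_{ij}\,\Tr\bigl(H^{-1}\Hess\varphi_i\,H^{-1}\Hess\varphi_j\bigr)e^{-\varphi}dy-\int\varphi_{ij}\langle\Hess V(\nabla\varphi)\nabla\varphi_i,\nabla\varphi_j\rangle e^{-\varphi}dy$, and the two remainder integrals are \emph{not} controlled by ``the diagonal bounds already obtained'': they involve the full third-derivative tensor $\Hess\varphi_i=(\partial^3_{ikl}\varphi)_{kl}$, i.e.\ precisely the mixed quantities you are trying to bound, whereas your part 1 only controls the vectors $\nabla\varphi_{ii}=(\partial^3_{iik}\varphi)_k$. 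Closing the argument along your lines needs an additional ingredient -- for instance integrating the $i=j$ master identity against the constant function to get $\int\Tr\bigl((H^{-1}\Hess\varphi_i)^2\bigr)e^{-\varphi}dy\leq\int\varphi_{ii}e^{-\varphi}dy=1$, and likewise for the $\Hess V$ term -- and nothing in your sketch produces it; moreover your explanation of the additive $d$ (``absorb the factor of $d$ from the summation'') does not correspond to an actual mechanism. The paper proceeds differently: a chain of integrations by parts against $e^{-\varphi}$ bounds $\int(\partial^3_{ijk}\varphi)^2e^{-\varphi}dx$ by the diagonal quantities of part 1 plus $\int(\partial^2_{jk}\varphi)^4e^{-\varphi}dx$ and fourth powers of $\partial_i\varphi$ (fourth moments of $\mu$), and the $d$ arises from $\sum_k\int(\partial^2_{jk}\varphi)^4e^{-\varphi}dx\leq Cd$ via Klartag's $L^4$ bounds on pure second derivatives. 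As written, your proposal establishes neither part with the stated constants, and part 2 not at all.
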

Note that under the isotropy condition, necessarily $\alpha \leq 1$. These bounds build up on the following estimates : 

\begin{prop}  \label{prop_bnds_kk}
Assume that $\mu = e^{-V(x)}dx$ is log-concave and isotropic and $\varphi$ is its moment map. 
\begin{enumerate} 
\item For any direction $e \in \mathbb{S}^{d-1}$ we have
$$\int{(\partial^2_{ee}\varphi)^pe^{-\varphi}dx} \leq 8^pp^{2p}$$

\item $\int{\langle (\nabla^2 \varphi)^{-1}\nabla \partial_{ee} \varphi, \nabla \partial_{ee}\varphi \rangle e^{-\varphi}dx} \leq 32\sqrt{\int{\langle x, e \rangle^4d\mu}} \leq C$, with $C$ a dimension-free constant, that does not depend on $\mu$.  

\item If $\mu$ has a convex support and $\Hess V \geq \alpha \Id$ with $\alpha> 0$, then $\Hess \varphi \leq \alpha^{-1}\Id$. 

\item If $\mu$ has full support and $\Hess V \leq \beta\Id$ with $\beta> 0$ then $\Hess \varphi \geq \beta^{-1}\Id$.
\end{enumerate}
\end{prop}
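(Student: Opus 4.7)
The proofs would rest on differentiating the Monge--Amp\`ere equation satisfied by the moment map, namely
\[
V(\nabla\varphi(x)) = \varphi(x) + \log\det\Hess\varphi(x).
\]
Fix a unit vector $e$, and write $H = \Hess\varphi$, $h = \partial^2_{ee}\varphi$, and $N_e = \Tr\bigl((H^{-1}\partial_e H)^2\bigr) \geq 0$. Differentiating the PDE twice in $e$ produces the fundamental pointwise identity
\[
\Tr(H^{-1}\Hess h) - \langle \nabla V(\nabla\varphi),\, \nabla h\rangle + h = \langle \nabla^2 V(\nabla\varphi)\, He,\, He\rangle + N_e,
\]
and all four claims should follow by exploiting this identity differently. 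For part (3), at a maximizer of $h$ on $\R^d$ one has $\nabla h = 0$ and $\Hess h \preceq 0$, hence $\Tr(H^{-1}\Hess h) \leq 0$. Combined with $N_e \geq 0$, the identity then forces $h_{\max} \geq \langle \nabla^2V(\nabla\varphi)He, He\rangle \geq \alpha \|He\|^2 \geq \alpha\, h_{\max}^2$ (using $e^\top H e = h$), so that $h_{\max} \leq \alpha^{-1}$.

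Part (4) does not come directly from this identity, and I would instead argue it via the Legendre dual $\psi = \varphi^*$. A direct computation shows $\psi$ satisfies the dual PDE
\[
V(y) + \psi(y) + \log\det\Hess\psi(y) = \langle y,\, \nabla\psi(y)\rangle.
\]
Differentiating twice in $e$ produces an analogous identity for $k = \partial^2_{ee}\psi$, and the maximum principle applied at a maximizer of $k$ gives $k_{\max} \leq \partial^2_{ee}V(y_{\max}) \leq \beta$. Since $\Hess\psi$ and $\Hess\varphi$ are pointwise inverses of each other along $\nabla\varphi$, this is equivalent to $\Hess\varphi \succeq \beta^{-1}\Id$. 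The main technical obstacle in both (3) and (4) is the rigorous application of the maximum principle on the unbounded domain $\R^d$: $h$ and $k$ need not attain their suprema in general, and I would justify the argument either by invoking moment-map regularity from \cite{CEK15, San16} or by passing through a compactly supported approximation of $\mu$ and taking a limit.

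Parts (2) and (1) would come from integral forms of the fundamental identity. For (2), integrating against $e^{-\varphi}dx$ and using isotropy ($\int h\, e^{-\varphi}dx = \int \langle y, e\rangle^2 d\mu = 1$), careful integration by parts on $\Tr(H^{-1}\Hess h)$ and $\langle \nabla V(\nabla\varphi), \nabla h\rangle$ (exploiting the first-order consequence $H\nabla V(\nabla\varphi) = \nabla\varphi + \nabla\log\det H$ of the moment-map equation to eliminate the cross terms) bounds $\int N_e\, e^{-\varphi}dx$ in terms of $\int h^2\, e^{-\varphi}dx$ and $\int \langle y, e\rangle^4 d\mu$. The inequality claimed in (2) then follows from the trace bound $\langle H^{-1}\nabla h, \nabla h\rangle \leq N_e \cdot h$ (a consequence of writing $\nabla h = (\partial_e H)e$ and applying $\|\cdot\|_{op} \leq \|\cdot\|_{HS}$ to $H^{-1/2}(\partial_e H)H^{-1/2}$) together with a final Cauchy--Schwarz. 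For (1), I would argue by induction on $p$. Starting from the pointwise identity $(\partial^2_{ee}\varphi) e^{-\varphi} = \partial_e\bigl((\partial_e\varphi) e^{-\varphi}\bigr) + (\partial_e\varphi)^2 e^{-\varphi}$, integration by parts gives
\[
I_p := \int h^p\, e^{-\varphi}dx = \int (\partial_e\varphi)^2 h^{p-1}\, e^{-\varphi}dx - (p-1)\int h^{p-2}(\partial_e\varphi)\, \partial^3_{eee}\varphi\, e^{-\varphi}dx;
\]
the first term is controlled by H\"older together with the subexponential moment bound $\int (\partial_e\varphi)^{2p}e^{-\varphi}dx = \int \langle y, e\rangle^{2p} d\mu \leq (Cp)^{2p}$ available for isotropic log-concave $\mu$, while the third-derivative term is absorbed via Cauchy--Schwarz using the part (2) estimate already in hand. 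Iterating the inductive bound yields $I_p \leq 8^p p^{2p}$.
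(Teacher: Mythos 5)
You should first note that the paper does not prove this proposition at all: all four items are imported from the literature (items 1 and 3 from Klartag's moment-measure paper \cite{K14}, with the precise form of item 1 taken from \cite[Proposition 3.2]{Fat18}, item 2 from \cite[eq.\ (55)]{KK15}, item 4 from the proof of \cite[Theorem 3.4]{KK18}), so you are supplying arguments the authors only cite. Your treatment of items (3) and (4) does reproduce the maximum-principle mechanism behind those citations: the twice-differentiated Monge--Amp\`ere identity you write is correct, the dual equation for $\psi=\varphi^*$ is correct, and evaluating at a maximizer of $\partial^2_{ee}\varphi$, resp.\ $\partial^2_{ee}\psi$, gives exactly the claimed bounds. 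The only caveat there is the one you flag yourself: the supremum need not be attained on $\R^d$, and making the approximation/regularization rigorous is the actual technical content of \cite{K14, KK18}.

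The genuine gaps are in items (1) and (2). For (2), your plan "integrate the identity, bound $\int N_e\,e^{-\varphi}$, then use $\langle H^{-1}\nabla h,\nabla h\rangle\le h\,N_e$ plus Cauchy--Schwarz" does not close: integrating the identity against $e^{-\varphi}$ only yields $\int N_e\,e^{-\varphi}\le\int h\,e^{-\varphi}=1$ (the second-order and drift terms vanish because $u\mapsto\Tr(H^{-1}\nabla^2 u)-\langle\nabla V(\nabla\varphi),\nabla u\rangle$ is a divergence with respect to $e^{-\varphi}$), and no Cauchy--Schwarz converts control of $\int N_e$ and $\int h^2$ into control of $\int h\,N_e$, which is what the pointwise matrix inequality requires. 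The correct move is to test the identity against $h$ itself, using the self-adjointness just mentioned, which gives $\int\langle H^{-1}\nabla h,\nabla h\rangle e^{-\varphi}+\int h\,N_e\,e^{-\varphi}\le\int h^2e^{-\varphi}$, and then to bound $\int h^2e^{-\varphi}$ by $C\sqrt{\int\langle x,e\rangle^4d\mu}$ through a separate integration by parts; as literally written your argument does not produce the stated inequality. Item (1) is worse: in your inductive identity the third-derivative term cannot be absorbed by item (2). What (2) controls is $\int\langle H^{-1}\nabla h,\nabla h\rangle e^{-\varphi}$, and the pointwise bound $(\partial^3_{eee}\varphi)^2\le h\,\langle H^{-1}\nabla h,\nabla h\rangle$ costs a factor of $h$; any Cauchy--Schwarz then leaves you with quantities like $\int h^{2p-3}(\partial_e\varphi)^2e^{-\varphi}$, i.e.\ moments of \emph{higher} order than $I_p$, so the induction runs in the wrong direction, and the pointwise bound $h\le\alpha^{-1}$ that would repair it is only available under the extra hypothesis of item (3), not for a general isotropic log-concave $\mu$. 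Moreover the claimed constant $8^pp^{2p}$, dimension-free and growing only like $p^{2p}$, has to be tracked exactly; "iterating the inductive bound" as sketched gives no such control. For (1) and (2) you therefore need genuinely different arguments --- or simply the references the paper relies on.
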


The first part was proved in \cite{K14} (see \cite[Proposition 3.2]{Fat18} for the precise statement). The second part is an immediate consequence of \cite[eq (55)]{KK15}. The third part was proved in \cite{K14}. The last part is part of the proof of \cite[Theorem 3.4]{KK18}

\begin{proof}[Proof of Proposition \ref{prop_3rd_order_bnds}]
The first part is an immediate consequence of items 2 and 3 of Proposition \ref{prop_bnds_kk}. For the second part, with several successive integrations by parts, we have,
\begin{align} \label{eq_bnd_3rdorder}
\int{(\partial^3_{ijk}\varphi)^2e^{-\varphi}dx} &= -\int{(\partial^4_{iijk}\varphi)(\partial^2_{jk}\varphi)e^{-\varphi}dx} + \int{(\partial^3_{ijk}\varphi)(\partial^2_{jk}\varphi)(\partial_i\varphi)e^{-\varphi}dx} \notag \\
&= \int{(\partial^3_{iik}\varphi)(\partial^3_{jjk}\varphi)e^{-\varphi}dx} - \int{(\partial^3_{iik}\varphi)(\partial^2_{jk}\varphi)(\partial_j\varphi)e^{-\varphi}dx} \notag\\
&\hspace{1cm}+ \int{(\partial^3_{ijk}\varphi)(\partial^2_{jk}\varphi)(\partial_i\varphi)e^{-\varphi}dx} \notag \\
&\leq \frac{1}{2}\int{(\partial^3_{ijk}\varphi)^2e^{-\varphi}dx} + \int{(\partial^3_{iik}\varphi)^2e^{-\varphi}dx} + \frac{1}{2}\int{(\partial^3_{jjk}\varphi)^2e^{-\varphi}dx} \notag \\
&\hspace{1cm} + \frac{1}{2}\int{(\partial^2_{jk}\varphi)^4e^{-\varphi}dx} + \frac{1}{4}\int{((\partial_i\varphi)^4 + (\partial_j\varphi)^4e^{-\varphi}dx}. 
\end{align}

Moreover, since $\nabla^2 \varphi$ is positive-definite, we have $|\partial^2_{jk}\varphi| \leq (\partial^2_{jj}\varphi + \partial^2_{kk} \varphi)/2$, and therefore
\begin{align}
\sum_{k} \int{(\partial^2_{jk}\varphi)^4e^{-\varphi}dx} &\leq \frac{1}{8}\sum_k \int{(\partial^2_{jj}\varphi + \partial^2_{kk} \varphi)^4e^{-\varphi}dx} \notag \\
&\leq Cd.
\end{align}
Summing \eqref{eq_bnd_3rdorder} over $k$ implies the result, via the moment bounds for isotropic log-concave distributions and the 2nd order bounds on $\varphi$. 
\end{proof}
We will also need the following result about radially symmetric functions.
\begin{prop} \label{prop_radial}
	Suppose that \eqref{eq_bound_condition} holds and that $\mu = e^{-V(x)}dx$ is radially symmetric and has full support. Then, there exists an absolute constant $c > 0$, such that if $d > c$:
	$$\int{|\partial_{ijk}^3 \varphi|^4e^{-\varphi}dx} \leq C\alpha^{-30
	}d^4.$$
	for some absolute constant $C > 0$. 
\end{prop}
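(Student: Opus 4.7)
The first move is to use radial symmetry to reduce to a one-dimensional problem. Since $\mu$ is radially symmetric, uniqueness of the moment map forces $\varphi$ to be radial as well, so I write $\varphi(x) = \psi(|x|)$ and $V(x) = U(|x|)$ with $\psi'(0) = U'(0) = 0$. The Hessian $\nabla^2 \varphi(x)$ then has eigenvalue $\psi''(r)$ along $x/r$ and $\psi'(r)/r$ with multiplicity $d-1$ in the orthogonal complement, so the Monge--Amp\`ere equation $e^{-\varphi} = e^{-V(\nabla \varphi)} \det \nabla^2 \varphi$ collapses, up to an additive constant, to
\[
-\psi(r) = \log \psi''(r) + (d-1)\log(\psi'(r)/r) - U(\psi'(r)).
\]
Differentiating in $r$ yields the explicit formula
\[
\psi'''(r) = \psi''(r)\left[-\psi'(r) - (d-1)\left(\frac{\psi''(r)}{\psi'(r)} - \frac{1}{r}\right) + U'(\psi'(r))\psi''(r)\right].
\]

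Next I convert this into pointwise bounds. Proposition~\ref{prop_bnds_kk} together with the two-sided hypothesis \eqref{eq_bound_condition} gives $\alpha \leq \psi''(r),\, \psi'(r)/r \leq \alpha^{-1}$, hence $\alpha r \leq \psi'(r) \leq r/\alpha$; applying the same reasoning to $U$ yields $|U'(s)| \leq s/\alpha$. Substituting these bounds into the formula for $\psi'''$ gives $|\psi'''(r)| \leq C \alpha^{-k_1}(r + d/r)$ for an absolute integer $k_1$. Setting $a(r) = \psi''(r)$ and $b(r) = \psi'(r)/r$, a direct computation shows
\[
\partial^3_{ijk}\varphi = b'(r)\frac{x_k}{r}\delta_{ij} + (a'(r)-b'(r))\frac{x_i x_j x_k}{r^3} + (a(r)-b(r))\left[\frac{\delta_{ik}x_j + x_i \delta_{jk}}{r^2} - \frac{2 x_i x_j x_k}{r^4}\right],
\]
with $b'(r) = (a(r)-b(r))/r$. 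Since each ratio $x_i/r$ has modulus at most $1$, the same type of bound $|\partial^3_{ijk}\varphi| \leq C \alpha^{-k_2}(r + d/r)$ follows.

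The final step is to raise this to the fourth power and integrate against $e^{-\varphi}dx$, using $(r + d/r)^4 \leq C(r^4 + d^4/r^4)$. The $r^4$-term is controlled by the change of variables $y = \nabla\varphi(x)$: since $|\nabla\varphi(x)| = \psi'(|x|) \in [\alpha|x|, |x|/\alpha]$ and the push-forward is the isotropic log-concave $\mu$, Paouris-type moment bounds give $\int r^4 e^{-\varphi}dx \leq C \alpha^{-4} d^2$. The $d^4 \int r^{-4}e^{-\varphi}dx$ term requires an inverse moment for $X \sim e^{-\varphi}$. The estimate $\alpha r^2/2 \leq \psi(r) - \psi(0) \leq r^2/(2\alpha)$ sandwiches $e^{-\psi}$ between two Gaussians, and combining this with the spherical Jacobian $r^{d-1}$ yields a small-ball estimate $\PP(|X| \leq s) \leq C(s/\sqrt{\alpha d})^d$ for $s \lesssim \sqrt{\alpha d}$. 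Splitting $\E[|X|^{-4}] = 4\int_0^\infty s^{-5}\PP(|X|\leq s)\,ds$ at a threshold $s_0 \sim \sqrt{\alpha d}$ and using the $d$-th power decay on $[0,s_0]$ yields $\E[|X|^{-4}] \leq C \alpha^{-k_3}/d^2$, provided $d$ is large enough that $s^{d-5}$ is integrable near zero and the $d$-th power decay dominates the $1/s^5$ singularity. Collecting the estimates gives a bound of the claimed form.

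The radial reduction makes the pointwise computations essentially mechanical once the $\alpha$-power bookkeeping is carried out. The genuine obstacle is the inverse moment: obtaining the correct $1/d^2$ decay of $\E[|X|^{-4}]$ hinges on the $d$-th power small-ball estimate, which is precisely what forces the absolute threshold $d > c$ in the statement.
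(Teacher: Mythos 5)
Your proposal is correct, and it reaches the stated bound (in fact a slightly stronger one, of order $\alpha^{-20}d^{2}$ after combining your estimates, which of course implies $C\alpha^{-30}d^{4}$). The overall strategy is the same as the paper's --- differentiate the Monge--Amp\`ere equation, exploit radial symmetry and the two-sided Hessian bounds $\alpha \mathrm{I_d}\leq\nabla^{2}\varphi\leq\alpha^{-1}\mathrm{I_d}$ from Proposition \ref{prop_bnds_kk}, and reduce everything to positive and negative moments of $\|x\|$ under $e^{-\varphi}$ --- but your execution differs in a genuine way. The paper differentiates the equation in a coordinate direction and only controls the weighted trace $\Tr\big((\nabla^{2}\varphi)^{-1}\nabla^{2}\partial_{i}\varphi\big)$, then uses Sherman--Morrison to compare it with the Laplacian $\sum_{j}\partial^{3}_{jji}\varphi$, bounds that quantity in $L^{4}(e^{-\varphi})$ using a Poincar\'e-based estimate for $\int\|\nabla V\|^{4}d\mu$ and the reverse H\"older inequality of Paouris for negative moments, and finally recovers the individual entries $\partial^{3}_{ijk}\varphi$ by a case analysis on the indices. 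You instead perform the full one-dimensional reduction $\varphi(x)=\psi(\|x\|)$, obtain a pointwise bound $|\psi'''(r)|\leq C\alpha^{-4}(r+d/r)$ directly from the radial ODE (using $|U'(s)|\leq s/\alpha$, which indeed follows since $U'(s)/s$ is an eigenvalue of $\nabla^{2}V$), and deduce a pointwise bound on every $\partial^{3}_{ijk}\varphi$ at once from the explicit formula in terms of $a=\psi''$, $b=\psi'/r$; the moment step then replaces the citation to Paouris by a self-contained Gaussian-sandwich small-ball estimate, made possible by $\alpha r^{2}/2\leq\psi(r)-\psi(0)\leq r^{2}/(2\alpha)$, and replaces the Poincar\'e argument by the pointwise bound $\|\nabla V(y)\|\leq\alpha^{-1}\|y\|$ together with $\int\|y\|^{4}d\mu\leq Cd^{2}$. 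What your route buys is a shorter, more elementary and fully self-contained proof with better dependence on $d$ and $\alpha$, with all index configurations handled uniformly; what it relies on, implicitly as does the paper, is enough classical regularity of $\varphi$ to manipulate third derivatives pointwise, and you should fix the constant in the small-ball bound and the threshold $s_{0}\sim\sqrt{\alpha d}$ consistently so that the $s^{d}$ decay beats the $s^{-5}$ singularity, which is exactly where the hypothesis $d>c$ enters (mirroring the dimension restriction the paper inherits from the cited reverse H\"older inequality).
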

\begin{proof}
	Note that $\varphi$ satisfies the Monge-Amp\`ere equation
	$$ e^{-\varphi} = e^{-V(\nabla \varphi)}\det\left(\nabla^2\varphi \right),$$
	and that it can be verified that if $V$ is a radial function then so is $\varphi$.
	Let $i = 1,\dots, d$, by taking the logarithm and differentiating the above equation we get:
	\begin{align*}
	\partial_i\varphi = \langle\nabla V(\nabla \varphi),\nabla \partial_i\varphi\rangle - \Tr\left(\nabla^2\partial_i\varphi\left(\nabla^2 \varphi\right)^{-1}\right).
	\end{align*}
	By Proposition \ref{prop_bnds_kk}, $\alpha\mathrm{I_d}\leq \nabla^2\varphi \leq \frac{1}{\alpha}\mathrm{I_d}$. Hence, 
	\begin{equation} \label{eq_trace_bound}
	\Tr\left(\left(\nabla^2 \varphi\right)^{-1}\nabla^2\partial_i\varphi\right) \leq |\partial_i\varphi|+ |\langle\nabla V(\nabla \varphi),\nabla \partial_i\varphi\rangle| \leq |\partial_i\varphi| + \alpha^{-1}\sqrt{d}\|\nabla V(\nabla \varphi)\|,
	\end{equation}
	where the second inequality used Cauchy-Schwartz along with $\|\nabla\partial_i\varphi\|\leq \sqrt{d}\|\nabla^2\varphi\|_{op}$.
	The proof will now be conducted in three steps:
	\begin{enumerate}
		\item We will bound $\Tr\left(\left(\nabla^2 \varphi\right)^{-1}\nabla^2\partial_i\varphi\right)$ in terms of $\Tr\left(\nabla^2\partial_i\varphi\right) = \sum\limits_{j=1}^d\partial_{jji}^3\varphi$.
		\item Using \eqref{eq_trace_bound}, we'll show that $\bigint{ \left(\sum\limits_{j=1}^d\partial_{jji}^3\varphi\right)^4e^{-\varphi}dx}$ cannot be large.
		\item Finally, we will use the previous step to bound  $\bigintss \left(\partial_{kji}^3\varphi\right)^4 e^{-\varphi}dx$.
	\end{enumerate}
	\paragraph{Step 1:}
	We now wish to understand $\Tr\left(\left(\nabla^2 \varphi\right)^{-1}\nabla^2\partial_i\varphi\right)$.
	Write $\varphi(x) = f(\|x\|^2)$, so that,
	\begin{equation} \label{eq_radialHessian}
	\nabla^2\varphi(x) = 2f'(\|x\|^2)\mathrm{I_d} + 4f''(\|x\|^2)xx^T.
	\end{equation}
	The bounds on $\nabla^2\varphi$ imply the following inequalities, which we shall freely use below:
	$$\alpha\leq 2f'(\|x\|), 2f'(\|x\|) + 4f''(\|x\|^2)\|x\|^2 \leq \alpha^{-1},$$
	and
	$$\left|4f''(\|x\|^2)\|x\|^2\right| \leq \alpha^{-1}.$$
	By the Sherman-Morrison formula,
	$$\left(\nabla^2\varphi\right)^{-1}(x) = \frac{1}{2f'(\|x\|^2)}\left(\mathrm{I_d} - \frac{4f''(\|x\|^2)}{2f'(\|x\|^2) +4f''(\|x\|^2)\|x\|^2}xx^T\right).$$
	So,
	\begin{equation}\label{eq_mixed_trace}
	\Tr\left(\left(\nabla^2 \varphi\right)^{-1}\nabla^2\partial_i\varphi\right) = \frac{1}{2f'(\|x\|^2)}\left(\sum\limits_{j=1}^d\partial^3_{jji}\varphi - \frac{4f''(\|x\|^2)}{2f'(\|x\|^2) +4f''(\|x\|^2)\|x\|^2} \partial^3_{xxe_i}\varphi\right).
	\end{equation}
	A calculation shows 
	$$\partial^3_{jji}\varphi(x) = 4x_i(2x_j^2f'''(\|x\|^2) +  f''(\|x\|^2) + 2\delta_{ij}f''(\|x\|^2)),$$
	and
	$$\sum_{j=1}^d\partial^3_{jji}\varphi(x) = 4x_i(2\|x\|^2f'''(\|x\|^2) +  (d+2)f''(\|x\|^2)).$$
	Also,
	$$\partial^3_{xxe_i}\varphi = 4x_i\left(2\|x\|^4f'''(\|x\|^2)+ 3\|x\|^2f''(\|x\|^2)\right).$$
	Thus, if $D(x) := 1- \frac{4f''(\|x\|^2)\|x\|^2}{2f'(\|x\|^2) +4f''(\|x\|^2)\|x\|^2} =  \frac{2f'(\|x\|^2)}{2f'(\|x\|^2) +4f''(\|x\|^2)\|x\|^2}$,
	\begin{align*}
	\sum\limits_{j=1}^d&\partial^3_{jji}\varphi - \frac{4f''(\|x\|^2)}{2f'(\|x\|^2) +4f''(\|x\|^2)\|x\|^2} \partial^3_{xxe_i}\varphi\\
	&= \left(1- \frac{4f''(\|x\|^2)\|x\|^2}{2f'(\|x\|^2) +4f''(\|x\|^2)\|x\|^2}\right)8x_i\|x\|^2f'''(\|x\|^2)\\
	&\ \ \ \  + \left(d+2 - 3\frac{4f''(\|x\|^2)\|x\|^2}{2f'(\|x\|^2)+4f''(\|x\|^2)\|x\|^2}\right)4x_if''(\|x\|^2)\\
	&= D(x)8x_i\|x\|^2f'''(\|x\|^2) + \left(d-1 + 3D(x)\right)4x_if''(\|x\|^2)\\
	&=D(x)\sum\limits_{j=1}^d\partial^3_{jji}\varphi  + \left(d-1 + 3D(x) -D(x)(d+2)\right)4x_if''(\|x\|^2) \\
	&\geq  D(x)\sum\limits_{j=1}^d\partial^3_{jji}\varphi - Cd\alpha^{-3}\frac{|x_i|}{\|x\|^2}\\
	&= D(x)\mathrm{Tr}\left(\nabla^2\partial_i\varphi\right) - Cd\alpha^{-3}\frac{|x_i|}{\|x\|^2}.
	\end{align*}
	In the inequality,  we have used \eqref{eq_radialHessian} along with the bounds $4|f''(\|x\|^2)| \leq \frac{\alpha^{-1}}{\|x\|^2}$, and $\alpha^2 \leq D(x) \leq \alpha^{-2}$. 
	\eqref{eq_mixed_trace} then implies:
	$$\Tr\left(\left(\nabla^2 \varphi\right)^{-1}\nabla^2\partial_i\varphi\right) \geq \frac{D(x)}{2f'(\|x\|^2)}\mathrm{Tr}\left(\nabla^2\partial_i\varphi\right) - \frac{Cd\alpha^{-3}}{2f'(\|x\|^2)}\frac{1}{\|x\|}.$$
	\paragraph{Step 2:}We now integrate with respect to the moment measure, so the estimate from the previous step, along with the bounds $\alpha^2 \leq D(x)$, $\alpha\leq 2f''(\|x\|)\leq\alpha^{-1}$, and \eqref{eq_trace_bound} give:
	\begin{align} \label{eq_moment_laplacian}
	\int\Tr\left(\nabla^2\partial_i\varphi\right)^4e^{-\varphi}dx \leq& C\alpha^{-12}\Big(\int|\partial_i\varphi|^4e^{-\varphi}dx \nonumber\\
	&+ \alpha^{-4}d^2\int\|\nabla V(\nabla \varphi)\|^4e^{-\varphi}dx + d^4\alpha^{-16}\int\frac{1}{\|x\|^{4}}e^{-\varphi}dx\Big).
	\end{align}
	Let us look at each term on the right hand side.
	By a change of variable
	$$\int|\partial_i\varphi|^4e^{-\varphi}dx  =\int\|x_i\|^{4}d\mu \leq C,$$
	since higher moments of coordinates of isotropic log-concave measures are controlled.
	
	For the second term, since $\nabla \varphi$ is a transport map, we get that 
	$$\int\|\nabla V(\nabla \varphi)\|^4e^{-\varphi}dx = \int\|\nabla V\|^4d\mu.$$
	Recalling that $\alpha\mathrm{I_d} \leq \nabla^2 V \leq \frac{1}{\alpha} \mathrm{I_d}$, we apply the Poincar\'e inequality for $\mu$, and since $|\nabla |\nabla V|^2|=2|\nabla^2V\nabla V| \leq 2\alpha^{-1}|\nabla V|$, 
	$$\int\|\nabla V(\nabla \varphi)\|^4e^{-\varphi}dx \leq \left(\int{|\nabla V|^2d\mu}\right)^2 + 4\alpha^{-3}\left(\int{|\nabla V|^2d\mu}\right).$$
	By integration by parts, $\int{|\nabla V|^2d\mu} = \int{\Delta V d\mu} \leq d\alpha^{-1}$, and hence
	$$\int\|\nabla V(\nabla \varphi)\|^4e^{-\varphi}dx \leq 5d^2\alpha^{-4}.$$
	For the third integral, we may use the fact that when $d \geq c$, a reverse H\"older inequality holds for negative moments, and may be applied to radially symmetric log-concave measures (see \cite[Theorem 1.4]{P12}). According to the inequality,
	$$ \int\frac{1}{\|x\|^{4}}e^{-\varphi}dx \leq C\left(\int\|x\|^{2}e^{-\varphi}dx\right)^{-2} \leq C\alpha^{-2}.$$
	We now plug the previous three displays into \eqref{eq_moment_laplacian} to conclude,
	\begin{equation} \label{eq_real_laplacian_bound}
	\bigints\left(\sum\limits_{j=1}^d\partial^3_{jji}\varphi\right)^4e^{-\varphi}dx = \int\mathrm{Tr}\left(\nabla^2\partial_i\varphi\right)^4e^{-\varphi}dx\leq C\alpha^{-30}d^4.
	\end{equation}
	\paragraph{Step 3:} Now, let $i,j,k = 1,\dots,d$ be distinct.
	We have,
	\begin{align*}
	\int\left(\partial^3_{ijk}\varphi\right)^4&e^{-\varphi}dx = 8^4\int\left(x_ix_jx_kf'''(\|x\|^2)\right)^4e^{-\varphi}dx\\
	&\leq \frac{8^4}{2^4}\left(\int x_i^4\left(x^2_jf'''(\|x\|^2)\right)^4e^{-\varphi}dx + \int x_i^4\left(x^2_kf'''(\|x\|^2)\right)^4e^{-\varphi}dx\right).\\
	&\leq \frac{8^4}{2^3}\int x_i^4\left(\|x\|^2f'''(\|x\|^2)\right)^4e^{-\varphi}dx\\
	&\leq 8^4\int x_i^4\left(\|x\|^2f'''(\|x\|^2) + (d+2)f''(\|x\|^2)\right)^4e^{-\varphi}dx \\
	&\ \ \ \ \ \ \ \ \ \ \ + 8^5\int x_i^4\left((d+2)f''(\|x\|^2)\right)^4e^{-\varphi}dx\\
	&=8^4\left(\bigints\left(\sum_{j=1}^d\partial^3_{jji}\varphi(x)\right)^4e^{-\varphi}dx + \int \left((d+2)x_if''(\|x\|^2)\right)^4e^{-\varphi}dx\right)\\
	&\leq\left(C\alpha^{-30}d^4 + (d+2)^4 \int \left(x_if''(\|x\|^2)\right)^4e^{-\varphi}dx\right),
	\end{align*}
	where we have used \eqref{eq_real_laplacian_bound} in the last inequality.  For the remaining integral term, denote $x_{\sim i} = (x_1,...,x_{i-1},x_{i+1},...,x_d)$. Then
	\begin{align*}
	\int \left(x_if''(\|x\|^2)\right)^4e^{-\varphi}dx &= \int \frac{1}{\|x_{\sim i}\|^4}\left(x_i\|x_{\sim i}\|\cdot f''(\|x\|^2)\right)^4e^{-\varphi}dx\\
	&\leq \int \frac{1}{\|x_{\sim i}\|^4}\sum\limits_{j\neq i} x_i^4x_j^4f''(\|x\|^2)^4e^{-\varphi}dx \\
	&=\int \frac{1}{\|x_{\sim i}\|^4}\sum\limits_{j\neq i} \left(\partial^2_{ij}\varphi\right)^4e^{-\varphi}dx\\
	&\leq d\alpha^{-4}\int \frac{1}{\|x_{\sim i}\|^4}e^{-\varphi}dx \leq Cd\alpha^{-6}.
	\end{align*}
	In the last inequality we again used the a reverse H\"older inequality for negative moments of radially symmetric log-concave measures.
	Plugging this estimate into the previous display finishes the proof, when  $|\{i,j,k\}| = 3$. The other cases can be proven similarly.
\end{proof}
We are now in a position to prove Lemma \ref{lem_lusin_Lipschitz}.
\begin{proof}[Proof of Lemma \ref{lem_lusin_Lipschitz}]
	Since $\nabla \varphi ^{-1}$ transports $\mu$ to $e^{-\varphi}dx$, from Proposition \ref{prop_3rd_order_bnds} we conclude that $\tau_\mu \in W^{1,2}(\mu)$ and that $\|D\tau_\mu\|_{L^2(\mu)} \leq Cd^{3/2}\alpha^{-1/2}$, where $D$ stands for the total derivative operator. Thus, by Proposition \ref{prop_RCD}, there exists a function $\tilde{g}$ for which,
	$$\|\tau_\mu(x)-\tau_\mu(y)\|\leq (\tilde{g}(x)+\tilde{g}(y))\|x-y\|,$$
	and $\|\tilde{g}\|_{L^2(\mu)} \leq C\frac{d^{3/2}}{\alpha^{1/2}}$.
	Proposition \ref{prop_bnds_kk} along with \eqref{eq_bound_condition} shows
	$$\sqrt{\alpha}\Id \leq \sqrt{\tau_\mu}.$$
	Hence,
	$$\left\|\sqrt{\tau_\mu(x)} - \sqrt{\tau_\mu(y)}\right\| \leq \frac{1}{\sqrt{\alpha}}\|\tau_\mu(x)-\tau_\mu(y)\|.$$
	Take now $g := \frac{1}{\sqrt{\alpha}}\tilde{g}$ to conclude the proof. 
	If $\mu$ is radially symmetric, then Proposition \ref{prop_radial} shows $\|D\tau_\mu\|_{L^4(\mu)}\leq Cd^{\frac{7}{4}} \alpha^{-\frac{15}{2}}$ and the proof continues in a similar way.
\end{proof}
\subsection{Exponential convergence to equilibrium}
We now show that the process \eqref{sde_sk} satisfies the exponential convergence to equilibrium property we require, as long as \eqref{eq_bound_condition} is satisfied.
\begin{lem} \label{lem_stein_contraction}
Assume that $\mu = e^{-V}dx$ with $\alpha^{-1}\Id \geq \nabla^2 V \geq \alpha\Id$. Then the diffusion process \eqref{sde_sk} satisfies Assumption \ref{ass:wass} with $\kappa = 1/2$ and $C_H = \alpha^{-2}$. 
\end{lem}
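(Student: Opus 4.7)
The plan is to exploit the moment map $\varphi$ to perform a change of variables $Y_t := (\nabla\varphi)^{-1}(X_t)$, reducing the problem to a contraction estimate for a reversible Langevin-type process with invariant measure $e^{-\varphi}dy$, then transfer the result back to the original coordinates using that $\nabla\varphi$ and $(\nabla\varphi)^{-1}$ are both $\alpha^{-1}$-Lipschitz.

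First I would record the key regularity consequence. By Proposition~\ref{prop_bnds_kk} (items 3 and 4), the hypothesis $\alpha\Id\leq \nabla^2 V\leq \alpha^{-1}\Id$ yields $\alpha\Id\leq \nabla^2\varphi\leq \alpha^{-1}\Id$, so both $\nabla\varphi$ and $\nabla\varphi^{-1}=\nabla\varphi^{*}$ are $\alpha^{-1}$-Lipschitz. Denoting by $\nu_t$ the law of $Y_t$, the bi-Lipschitzness gives
\[
W_2(\mu_t,\mu)\leq \alpha^{-1} W_2(\nu_t,e^{-\varphi}),\qquad W_2(\nu_0,e^{-\varphi})\leq \alpha^{-1}W_2(\mu_0,\mu),
\]
so the statement (with $C_H=\alpha^{-2}$, $\kappa=1/2$) reduces to establishing the plain contraction $W_2(\nu_t,e^{-\varphi})\leq e^{-t/2}W_2(\nu_0,e^{-\varphi})$ for the auxiliary process; composing the two Lipschitz comparisons will then absorb the full $\alpha^{-2}$ prefactor.

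Next I would identify the SDE satisfied by $Y_t$. Applying It\^o's formula to $x\mapsto \nabla\varphi^{*}(x)$, and simplifying by means of the differentiated logarithmic Monge--Amp\`ere identity $\nabla\varphi(y)+\nabla\log\det\nabla^2\varphi(y)=\nabla^2\varphi(y)\cdot\nabla V(\nabla\varphi(y))$ together with the matrix calculus identity $\nabla\cdot(\nabla^2\varphi)^{-1}=-(\nabla^2\varphi)^{-1}\nabla\log\det\nabla^2\varphi$, one finds
\[
dY_t=-\nabla V(\nabla\varphi(Y_t))\,dt+\sqrt{2}\,(\nabla^2\varphi(Y_t))^{-1/2}\,dB_t.
\]
This is a reversible diffusion with invariant measure $e^{-\varphi}dy$, generator in divergence form $L_Y f=e^{\varphi}\,\nabla\cdot(e^{-\varphi}(\nabla^2\varphi)^{-1}\nabla f)$, and diffusion tensor uniformly elliptic with spectrum in $[\alpha,\alpha^{-1}]$.

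Finally, the central task is the $W_2$-contraction at rate $1/2$ for the $Y$-process. I would use a synchronous coupling: couple two copies $(Y_t,Y_t')$ by the same Brownian motion with $(Y_0,Y_0')$ in the $W_2$-optimal coupling. It\^o's formula applied to $|Y_t-Y_t'|^2$ produces a drift contribution $-2\langle Y-Y',\nabla V(\nabla\varphi(Y))-\nabla V(\nabla\varphi(Y'))\rangle$, to be controlled from below using the joint $\alpha$-monotonicity of $\nabla V$ and of $\nabla\varphi$, balanced against a fluctuation term $2\|(\nabla^2\varphi(Y))^{-1/2}-(\nabla^2\varphi(Y'))^{-1/2}\|_{HS}^2$ coming from the state-dependent diffusion. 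Closing these two terms against each other via the uniform ellipticity and a Gr\"onwall estimate should yield the rate $1/2$.

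The main obstacle is closing this Gr\"onwall inequality. Because $(\nabla^2\varphi)^{-1/2}$ is not globally Lipschitz under our assumptions, the fluctuation term cannot be dominated pointwise by a constant multiple of $|Y-Y'|^2$; bridging that gap is the technical heart of the proof. The cleanest route is to recast the argument in the Bakry-\'Emery $\Gamma_2$-calculus for the generator $L_Y$, for which the inequality $\Gamma_2\geq \tfrac12 \Gamma$, with $\Gamma(f)=\langle\nabla f,(\nabla^2\varphi)^{-1}\nabla f\rangle$, becomes accessible using the reversibility with respect to $e^{-\varphi}$ and the two-sided bounds $\alpha\Id\leq \nabla^2\varphi\leq\alpha^{-1}\Id$; as usual this $\Gamma_2$-bound is equivalent to the stated $W_2$-contraction.
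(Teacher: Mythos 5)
Your reduction is sound as far as it goes, and it matches the paper's starting point: passing to $Y_t=\nabla\varphi^{*}(X_t)$, identifying (via the differentiated Monge--Amp\`ere equation and the Hessian-structure identity for $\nabla\cdot(\nabla^2\varphi)^{-1}$) the generator $L_Y f=e^{\varphi}\nabla\cdot\bigl(e^{-\varphi}(\nabla^2\varphi)^{-1}\nabla f\bigr)$, and transferring back with the two-sided bounds $\alpha\Id\le\nabla^2\varphi\le\alpha^{-1}\Id$ from Proposition \ref{prop_bnds_kk}. The genuine gap is the step you yourself flag as the ``technical heart'': you assert that $\Gamma_2\ge\tfrac12\Gamma$ for $L_Y$ ``becomes accessible using the reversibility with respect to $e^{-\varphi}$ and the two-sided bounds $\alpha\Id\le\nabla^2\varphi\le\alpha^{-1}\Id$.'' It does not. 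For a diffusion with state-dependent matrix $(\nabla^2\varphi)^{-1}$, the Bakry--\'Emery tensor involves derivatives of the metric, i.e.\ third derivatives of $\varphi$, which are not controlled pointwise by ellipticity; uniform two-sided Hessian bounds alone yield, at best, a curvature bound (hence a contraction rate) that degrades with $\alpha$ and with the unbounded quantities $\nabla^3\varphi$, not the clean, dimension-free and $\alpha$-free rate $1/2$. The rate $1/2$ is a special structural fact about moment maps: it is Kolesnikov's theorem \cite{Kol14} (see also \cite{KK17}) that the weighted Hessian manifold $(\R^d,(\nabla^2\varphi)^{-1},e^{-\varphi})$ associated with the toric K\"ahler--Einstein equation $e^{-\varphi}=e^{-V(\nabla\varphi)}\det\nabla^2\varphi$, with $V$ convex, satisfies CD$(1/2,\infty)$; the log-concavity of $\mu$ enters through that equation, not through ellipticity. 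This is precisely the external input the paper's proof invokes, and your proposal in effect assumes it without proof. (Your fallback synchronous coupling also cannot produce rate $1/2$: the drift $-\nabla V(\nabla\varphi(y))$ is a composition of two monotone maps, which is not monotone in general, and the diffusion fluctuation term has no pointwise Lipschitz control, as you note.)

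A second, smaller inaccuracy: even granting $\Gamma_2\ge\tfrac12\Gamma$ with $\Gamma(f)=\langle(\nabla^2\varphi)^{-1}\nabla f,\nabla f\rangle$, the von Renesse--Sturm equivalence \cite{vRS05} gives exponential $W_2$-contraction with respect to the \emph{intrinsic} Riemannian distance $d_\varphi$ of that carr\'e du champ, not the Euclidean $W_2$ appearing in your reduction $W_2(\nu_t,e^{-\varphi})\le e^{-t/2}W_2(\nu_0,e^{-\varphi})$. You therefore need an additional comparison $\sqrt{\alpha}\,|x-y|\le d_\varphi(x,y)\le \alpha^{-1/2}|x-y|$ before composing with the Lipschitz transfers $\nabla\varphi,\nabla\varphi^{*}$, exactly as in the paper's proof; your constant bookkeeping ($C_H=\alpha^{-2}$ from the two Lipschitz transfers alone) does not account for this extra factor. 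So the architecture of your argument is the right one, but the lemma's content — the rate $1/2$ — remains unproved without citing or reproving the CD$(1/2,\infty)$ property of the moment-map Hessian structure, and the metric in which the contraction holds must be tracked correctly.
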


\begin{proof}
As demonstrated in \cite{Kol14}, the diffusion process $\nabla \varphi^*(X_t)$, where  $(X_t)$ solves \eqref{sde_sk}, satisfies the Bakry-Emery curvature dimension condition CD($1/2, \infty$) when viewed as the canonical diffusion process on the weighted manifold $(\R^d, (\nabla^2 \varphi)^{-1}, e^{-\varphi})$. Therefore it is a contraction in Wasserstein distance, with respect to the Riemannian metric $d_{\varphi}$ with tensor $(\nabla^2 \varphi)^{-1}$ \cite{vRS05}. That is
$$W_{2, d_{\varphi}}(\mu_t \circ \nabla \varphi, e^{-\varphi}) \leq e^{-t/2}W_{2, d_{\varphi}}(\mu_0 \circ \nabla \varphi, e^{-\varphi}).$$
From the bounds on $\nabla^2 \varphi$ given by Proposition \ref{prop_bnds_kk}, we have
$$\alpha^{-1}|x-y|^2 \geq d_{\varphi}(x,y)^2 \geq \alpha|x-y|^2,$$
and the result follows, using again the two-sided Lipschitz bounds on $\nabla \varphi$. 
\end{proof}

\subsection{Stability for Stein kernels}
\begin{proof}[Proof of Theorem \ref{thm_stein}]
	We consider the two processes 
	\begin{align*}
	dX_t &=  -X_t + \sqrt{2\tau(X_t)}dB_t,\\
	dY_t &=  -Y_t + \sqrt{2\sigma(Y_t)}dB_t.\\
	\end{align*}
	By Lemma \ref{lem_invariant_stein}, $\mu$ and $\nu$ are the respective invariant measures of $X_t$ and $Y_t$.
	By Lemma \ref{lem_stein_contraction}, Assumption \ref{ass:wass} is satisfied with $\kappa = \frac{1}{2}, C_H = \alpha^{-2}$.
	By Lemma \ref{lem_lusin_Lipschitz}, Assumption \ref{ass:coeff} is satisfied with $\|g\|^2_{L^2(\mu)} \leq C d^3\alpha^{-2}$.
	
	Set $p = \infty, q = 1$ and $R > 0$. Plugging the above estimates to Theorem \ref{thm_main_stability}, we get
	$$\widetilde{W}_{2,R}^2(\nu,\mu) \leq C\alpha^{-6}d^3R\left\|d\nu/d\mu\right\|_{\infty}\frac{\ln\left(\ln\left(1 + \frac{R}{\beta}\right)\right) +\ln\left(M\right)+R}{\ln\left(1 + \frac{R}{\beta}\right)}.$$
	$\nu$ and $\mu$ are log-concave and in-particular have sub-exponential tails. 
	We apply Lemma \ref{lem_truncwass}, from the appendix, to obtain 
	a constant $C' > 0$ such that 
	$$W_2(\nu,\mu) \leq 2\widetilde{W}_{2,C'M\ln(M)}(\nu,\mu),$$ which proves the first part of the theorem. For the second part, if $\mu$ is radially symmetric, then we take $p = q = 2$, and by Lemma \ref{lem_lusin_Lipschitz}, \ref{ass:coeff} is now satisfied with $\|g\|^2_{L^4(\mu)} \leq C d^{\frac{7}{2}}\alpha^{-16}$ and the proof continues in the same way.
	The last part of the theorem is an immediate consequence of Theorem \ref{thm_stability_lip}.
\end{proof}

\paragraph{\underline{Acknowledgments:}} We thank Ronen Eldan, Bo'az Klartag, Claude Le Bris, Michel Ledoux and Guillaume Mijoule for useful discussions and for their enlightening comments. Part of this work was done while M.F. was a guest of the teams Matherials and Mokaplan at the INRIA Paris, whose hospitality is gratefully acknowledged. M.F. was supported by the Projects MESA (ANR-18-CE40-006) and EFI (ANR-17-CE40-0030) of the French National Research Agency (ANR).

\newpage
\appendix
\section{Transport inequalities for the truncated Wasserstein distance}
\begin{proof}[Proof of Lemma \ref{lem_seis_w}]
		We let $\pi$ denote the optimal coupling for $\mathcal{D}_{\delta}$ and $(X,Y)\sim \pi$. Define the sets
		\begin{align*}
		D &= \left\{(x,y) \in \R^{2d} : \|x-y\|^2 \leq R\right\},\\
		\widetilde{D} &= \left\{(x,y) \in D : \ln\left(1 + \frac{\|x-y\|^2}{\delta^2} \right)\leq \frac{\mathcal{D}_{\delta}(\mu, \nu)}{\epsilon}\right\}.
		\end{align*}
		We now write
		$$\E\left[\min\left(\|X-Y\|^2,R\right)\right] = \E\left[\|X-Y\|^2\mathbbm{1}_{\widetilde{D}} \right] + \E\left[\|X-Y\|^2\mathbbm{1}_{D\setminus\widetilde{D}}\right] + R\cdot\E\left[\mathbbm{1}_{\R{^{2d}}}\setminus D\right],$$
		and bound each term separately.
		Observe that for any $\alpha \in \R$,
		$$ \ln\left(1+\frac{x^2}{\delta^2}\right) \leq \alpha \iff |x| \leq  \delta\sqrt{e^\alpha-1}.$$
		Thus,
		$$\E\left[\|X-Y\|^2\mathbbm{1}_{\widetilde{D}} \right] \leq \delta^2 \exp\left(\frac{\mathcal{D}_{\delta}(\mu, \nu)}{\epsilon}\right).$$
		Next, by Markov's inequality
		$$\PP\left(\ln\left(1 + \frac{\|X-Y\|^2}{\delta^2}\right) \geq \frac{\mathcal{D}_{\delta}(\mu, \nu)}{\epsilon}\right) \leq \frac{\epsilon}{\mathcal{D}_{\delta}(\mu, \nu)}\E\left[\ln\left(1 + \frac{\|X-Y\|^2}{\delta^2}\right)\right] = \epsilon.$$
		So,
		$$\E\left[\|X-Y\|^2\mathbbm{1}_{D\setminus\widetilde{D}}\right] \leq R\E\left[\mathbbm{1}_{\R^{2d}\setminus\widetilde{D}}\right] \leq R\epsilon.$$
		Finally, a second application of Markov's inequality gives
		\begin{align*}
		R\E\left[\mathbbm{1}_{\R{^{2d}}}\setminus D\right] &\leq R \cdot \PP\left(\|X-Y\|^2 \geq R\right) \\
		&=R\cdot\PP\left(\ln\left(1+\frac{\|X-Y\|^2}{\delta^2}\right) \geq \ln\left(1 + \frac{R^2}{\delta^2}\right)\right) \leq R \frac{\mathcal{D}_{\delta}(\mu, \nu)}{\ln\left(1+\frac{R^2}{\delta^2}\right)}.
		\end{align*}
\end{proof}
\begin{lem} \label{lem_truncwass}
	Let $X\sim \mu,Y \sim \nu$ be two centered random vectors in $\mathbb{R}^d$. Assume that both $X$ and $Y$ are sub-exponential with parameter $M$, in the sense that for every $k \geq 2$,
	$$\frac{\mathbb{E}\left[\|X\|^k\right]^{\frac{1}{k}}}{k},\frac{\mathbb{E}\left[\|Y\|^k\right]^{\frac{1}{k}}}{k} \leq M.$$
	Then
	$$W_2^2(\mu,\nu) \leq 2\widetilde{W}_{2,CM\ln(M)}^2(\mu,\nu),$$
	for a universal constant $C > 0$.
\end{lem}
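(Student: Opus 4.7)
The plan is to bound $W_2^2$ by the transport cost of the $\widetilde{W}$-optimal coupling and then control the resulting tail. Letting $\pi^*$ denote the optimal coupling for $\widetilde{W}_{2,R}^2(\mu,\nu)$ with $R=CM\ln M$ (for some universal constant $C$ to be chosen), the identity $|x-y|^2=\min(|x-y|^2,R)+(|x-y|^2-R)^{+}$ gives
$$W_2^2(\mu,\nu)\leq \int |x-y|^2\,d\pi^* = \widetilde{W}_{2,R}^2(\mu,\nu) + \int (|x-y|^2-R)^{+}\,d\pi^*.$$
It then suffices to show that the residual tail integral is bounded by $\widetilde{W}_{2,R}^2(\mu,\nu)$, which yields the claimed factor of two.

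The key structural point is that on $A=\{|x-y|^2>R\}$ the truncated cost is constant ($\min\equiv R$), so the restriction $\pi^*|_A$ is only pinned by its submarginals $\mu^*_A,\nu^*_A$, and may be freely replaced by any other coupling of these submarginals without changing $\widetilde{W}_{2,R}^2$. Applying the elementary inequality $|x-y|^2\leq 2(|x|^2+|y|^2)$ then gives, for any such replacement,
$$\int_A |x-y|^2\,d\pi^* \leq 2\int |x|^2\,d\mu^*_A + 2\int |y|^2\,d\nu^*_A,$$
and Cauchy--Schwarz combined with the fourth-moment estimate $\E[\|X\|^4]\leq (4M)^4$, which is an immediate consequence of the sub-exponential hypothesis, reduces this to $\int |x|^2\,d\mu^*_A \leq 16 M^2\sqrt{\pi^*(A)}$, and similarly for $\nu^*_A$.

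The remaining task is to control $\pi^*(A)$, and here I would combine two complementary estimates. First, the optimality inequality $\widetilde{W}_{2,R}^2\geq R\,\pi^*(A)$ gives $\pi^*(A)\leq \widetilde{W}_{2,R}^2/R$. Second, the sub-exponential moment bound, via Markov $\PP(\|X\|>t)\leq (kM/t)^k$ optimised in $k$, gives the exponential decay
$$\pi^*(A)\leq \mu(\|X\|>\sqrt{R}/2)+\nu(\|Y\|>\sqrt{R}/2)\leq 2\exp\!\bigl(-\tfrac{\sqrt{R}}{2eM}\bigr).$$
Together these produce two bounds on the tail integral: one of order $M^2\widetilde{W}_{2,R}/\sqrt{R}$ and one of order $M^2\exp(-\sqrt{R}/(4eM))$.

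The main obstacle will be balancing these two estimates across the full range of values of $\widetilde{W}_{2,R}$. When $\widetilde{W}_{2,R}$ is not too small compared to $M^2/\sqrt{R}$, the first bound directly gives tail $\leq \widetilde{W}_{2,R}^2$, for a suitable choice of $R$; in the complementary small-$\widetilde{W}$ regime, the exponential bound must take over, and choosing $R$ of order $M\ln M$ with a sufficiently large universal constant $C$ is what should make $\exp(-\sqrt{R}/(4eM))$ overwhelm the polynomial $M^2$ prefactor. Piecing together the two regimes yields $\int (|x-y|^2-R)^{+}\,d\pi^*\leq \widetilde{W}_{2,R}^2(\mu,\nu)$, and hence the lemma.
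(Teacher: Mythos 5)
Your opening moves coincide with the paper's: you take the optimal coupling $\pi^*$ for $\widetilde{W}_{2,R}$, split the quadratic cost at the truncation level, and control the excess over the truncation by Cauchy--Schwarz together with the fourth-moment bound $\E[\|X\|^4]\le (4M)^4$ and the sub-exponential tail. The genuine gap is in your closing paragraph, where you aim to prove that the residual term is bounded by $\widetilde{W}^2_{2,R}(\mu,\nu)$ itself; neither of your two estimates can deliver this. The first one gives residual $\le C' M^2\,\widetilde{W}_{2,R}/\sqrt{R}$, which is at most $\widetilde{W}^2_{2,R}$ only when $\widetilde{W}_{2,R}\ge C'M^2/\sqrt{R}$; but since trivially $\widetilde{W}^2_{2,R}\le R$, that regime can be nonempty only if $R\gtrsim M^2$, which is incompatible with $R=CM\ln M$ for large $M$ -- so your ``first regime'' is vacuous at the stated truncation level. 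The second estimate is a quantity depending only on $M$ and $R$, of order $M^2 e^{-\sqrt{R}/(4eM)}$, and such a bound can never be dominated by $\widetilde{W}^2_{2,R}$, which can be arbitrarily small for $\mu\neq\nu$ extremely close. Worse, the relevant threshold for $\|X-Y\|$ is $\sqrt{R}$, not $R$: with $R=CM\ln M$ the exponent is $\sqrt{R}/(4eM)=\sqrt{C\ln M/M}\,/(4e)$, which tends to $0$ as $M\to\infty$, so the exponential factor does not even decay, let alone beat the $M^2$ prefactor. No choice of the universal constant $C$ repairs this balancing step.

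The paper closes the argument by a different device, which is the idea you are missing: it compares the residual not to the truncated cost but to the \emph{untruncated} cost of the same coupling, and then absorbs. Concretely, it writes $\E_{\pi^*}[\|X-Y\|^2]\le \widetilde{W}^2_{2,R}+\sqrt{\E\|X-Y\|^4}\,\sqrt{\PP(\|X-Y\|^2>R)}$, argues (via a moment comparison for the coupled difference $X-Y$ and the choice of $R$) that the second term is at most $\tfrac12\E_{\pi^*}[\|X-Y\|^2]$, and absorbs this half into the left-hand side to get $W_2^2\le\E_{\pi^*}[\|X-Y\|^2]\le 2\widetilde{W}^2_{2,R}$. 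This self-improving absorption is exactly what avoids the impossible task of bounding the tail term by the (possibly tiny) truncated cost, and it is where the factor $2$ in the statement comes from. To make your write-up work you would have to restructure the final step along these lines; as it stands, the two-regime balancing does not go through.
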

\begin{proof}
	Fix $R > 0$ and let $\pi$ denote the optimal coupling for $\widetilde{W}_{2,R}$ and $(X,Y)\sim \pi$.
	Then,
	\begin{align*}
	 \mathbb{E}\left[\|X-Y\|^2\right]&= \mathbb{E}\left[\|X-Y\|^2\mathbbm{1}_{\|X-Y\|^2\leq R}\right] +\mathbb{E}\left[\|X-Y\|^2\mathbbm{1}_{\|X-Y\|^2>R}\right]\\
	&\leq \widetilde{W}^2_{2,R}(\mu,\nu) + \sqrt{\mathbb{E}\left[\|X-Y\|^4\right]\PP\left(\|X-Y\|>R\right)}.
	\end{align*}
	$X-Y$ also has a sub-exponential law with parameter $C'M$, where $C'>0$ is a constant. Thus, for some other constant $C$,
	$$\sqrt{\mathbb{E}\left[\|X-Y\|^4\right]} \leq CM\mathbb{E}\left[\|X-Y\|^2\right],$$
	and
	$$\sqrt{\PP\left(\|X-Y\|>R\right)} \leq e^{-\frac{R}{CM}}.$$
	Take $R = CM\ln(2CM)$, to get.
	$$\mathbb{E}\left[\|X-Y\|^2\right] \leq \widetilde{W}^2_{2,R}(\mu,\nu) + \frac{1}{2}\mathbb{E}\left[\|X-Y\|^2\right],$$
	which implies,
	$$W_2^2(\mu,\nu)\leq \mathbb{E}\left[\|X-Y\|^2\right] \leq 2\widetilde{W}^2_{2,R}(\mu,\nu).$$
\end{proof}


\begin{thebibliography}{99}

\bibitem{ABT} L. Ambrosio, E. Bru\'e and D. Trevisan, 
Lusin-type approximation of Sobolev by Lipschitz functions, in Gaussian and RCD(K,$\infty$) spaces. \textit{Adv. Math.},  339 (2018), 426--452. 

\bibitem{Bak87} D. Bakry, \'Etude des transformations de Riesz dans les vari\'et\'es riemanniennes \`a courbure de Ricci minor\'ee. S\'eminaire de Probabilit\'es, XXI, Lecture Notes in Math. 1247, 137--172, Springer, Berlin.

\bibitem{BLS} D. Bakry, M. Ledoux and L. Saloff-Coste. Markov semigroups at Saint-Flour. Probability at Saint-Flour, Springer, Heidelberg. 2012.

\bibitem{BB13} R. J. Berman and B. Berndtsson, Real Monge-Ampère equations and K\"ahler-Ricci solitons on toric log Fano varieties
\textit{Annales de la facult\'e des sciences de Toulouse Sér. 6}, 22 no. 4, (2013), p. 649--711. 

\bibitem{BD17} C. Bianca and C. Dogbe, On the existence and uniqueness of invariant measure for multidimensional diffusion processes, \textit{Nonlinear Stud.}, 24, 437 -- 468, 2017. 

\bibitem{BRS} V. I. Bogachev, M. R\"ockner and S. V. Shaposhnikov, Distances between transition probabilities of diffusions and applications to nonlinear Fokker–Planck–Kolmogorov equations.  \textit{J. Funct. Anal.} 271 (2016), no. 5, 1262--1300.

\bibitem{BRS18} V.I. Bogachev, V. I., M. R\"ockner and S.V. Shaposhnikov, The Poisson equation and estimates for distances between stationary distributions of diffusions. \textit{J. Math. Sci. (N.Y.)} 232 (2018), no. 3, \textit{Problems in mathematical analysis.} No. 92 (Russian), 254--282.

\bibitem{CJ18} N. Champagnat and P.-E. Jabin, Strong solutions to stochastic differential equations with rough coefficients. 
\textit{Ann. Probab.} 46 (2018), no. 3, 1498–1541. 

\bibitem{Cha09} S. Chatterjee, Fluctuations of eigenvalues and second order Poincar\'e inequalities. \textit{Probab. Theory Related Fields}, 143, 1-40, 2009. 

\bibitem{Cha14} S. Chatterjee, A short survey of Stein's method. Proceedings of ICM 2014, Vol IV, 1-24, 2014. 

\bibitem{CFP17} T. Courtade, M. Fathi and A. Pananjady, Existence of Stein kernels via spectral gap, and discrepancy bounds. \textit{Ann. IHP: Probab. Stat.} 55, 2, 2019. 

\bibitem{CEK15} D. Cordero-Erausquin and B. Klartag, Moment measures, \textit{J. Funct. Anal.} 268 (2015) 3834--3866.

\bibitem{CDL} G. Crippa and C. De Lellis,  Estimates and regularity results for the DiPerna-Lions flow. \textit{J. Reine Angew. Math.} 616 (2008), 15--46.

\bibitem{DPL89} R.J. DiPerna and P. L. Lions, Ordinary differential equations, transport theory and
Sobolev spaces, \textit{Invent. Math.}, 3, 511--547, 1989.

\bibitem{Don08} S. K. Donaldson, K\"ahler  geometry  on  toric  manifolds,  and  some  other  manifolds
with large symmetry. Handbook of geometric analysis. Adv. Lect. Math. (ALM), Vol. 7, No. 1, Int. Press, Somerville, MA, 29--75, 2008.

\bibitem{FLT10} S. Fang, D. Luo and A. Thalmaier, 
Stochastic differential equations with coefficients in Sobolev spaces.
\textit{J. Funct. Anal.} 259 (2010), no. 5, 1129–1168. 

\bibitem{FSX19} X. Fang, Q.-M.Shao and L. Xu,  Multivariate approximations in Wasserstein distance by Stein's method and Bismut's formula. \textit{Probab. Theory Related Fields} 174 (2019), no. 3-4, 945–979.

\bibitem{Fat18} M. Fathi, Stein kernels and moment maps. \textit{Ann. Probab.}, Vol. 47, No. 4, 2172-2185, 2019.  

\bibitem{Fi08} A. Figalli, Existence and uniqueness of martingale solutions for SDEs with rough or degenerate coefficients. 
\textit{J. Funct. Anal.} 254 (2008), no. 1, 109--153. 

\bibitem{GDVM19} J. Gorham, A.B. Duncan, Andrew B., S.J. Vollmer, Sebastian J. and L. Mackey, Measuring sample quality with diffusions. \textit{Ann. Appl. Probab.} 29 (2019), no. 5, 2884–2928.

\bibitem{HS20} K. Huynh and F. Santambrogio, 
$q$-Moment Measures and Applications: a new Approach via Optimal Transport. \textit{Arxiv preprint}, 2020. 

\bibitem{P12} G. Paouris, Small ball probability estimates for log-concave measures. \textit{Trans. Amer. Math. Soc.} 364 (2012), no 1, 287--308.

\bibitem{K14} B. Klartag, Logarithmically-concave moment measures I. Geometric Aspects of Functional Analysis, Lecture Notes in Math. 2116, Springer (2014), 231--260. 

\bibitem{KK15} B. Klartag and A. V. Kolesnikov, Eigenvalue distribution of optimal transportation. \textit{Analysis \& PDE.}, Vol. 8, No. 1, (2015), 33--55.

\bibitem{KK17} B. Klartag and A. V. Kolesnikov, Remarks on curvature in the transportation metric. \textit{Analysis Math.}, Vol. 43, No. 1, (2017), 67--88. 

\bibitem{Kol13} A. V. Kolesnikov,  On Sobolev Regularity of Mass Transport and Transportation Inequalities, \textit{Theory of Probability and Its Applications}. 2013. Vol. 57. No. 2. P. 243-264.

\bibitem{Kol14} A. V. Kolesnikov, Hessian metrics, CD(K,N)-spaces, and optimal transportation of log-concave measures, \textit{Discrete and Continuous Dynamical Systems - Series A}. 2014. Vol. 34. No. 4. P. 1511-1532.

\bibitem{KK18} A. V. Kolesnikov and E. D. Kosov, Moment measures and stability for Gaussian inequalities. \textit{Theory Stoch. Process.} 22 (2017), no. 2, 47--61.



\bibitem{LBL08} C. Le Bris and P.-L. Lions, Existence and uniqueness of solutions to Fokker-Planck type equations with irregular coefficients. 
\textit{Comm. Partial Differential Equations} 33 (2008), no. 7-9, 1272--1317.

\bibitem{LNP15} M. Ledoux, I. Nourdin and G. Peccati,   
Stein's method, logarithmic Sobolev and transport inequalities 
\textit{Geom. Funct. Anal.} 25, 256--306 (2015). 


\bibitem{Leg16} E. Legendre, Toric K\"ahler-Einstein metrics and convex compact polytopes, \textit{Journal of Geometric Analysis}, 26(1), 399--427, 2016. 


\bibitem{LL18} H. Li and D. Luo, Quantitative stability estimates for Fokker-Planck equations.  \textit{J. Math. Pures Appl.} (9) 122 (2019), 125–163.

\bibitem{Liu77} F. C. Liu, 
A Luzin type property of Sobolev functions.
\textit{Indiana Univ. Math. J.} 26 (1977), no. 4, 645--651. 


\bibitem{MRS18} G. Mijoule, G. Reinert and Y. Swan, Stein operators, kernels and discrepancies for multivariate continuous distributions. \textit{Arxiv preprint}, 2018. 

\bibitem{Mon19} P. Monmarch\'e, Generalized $\Gamma$ calculus and application to interacting particles on a graph, 2015, \textit{Potential Analysis} 50, no. 3, 439-466, 2019 

\bibitem{NP12} I. Nourdin and G. Peccati, Normal  approximations with Malliavin calculus: from Stein's method to universality. Cambridge Tracts in Mathematics. Cambridge University Press, 2012.

\bibitem{NPR10} I. Nourdin, G. Peccati and A. R\'eveillac. Multivariate normal approximation using Stein’s method and Malliavin calculus. \textit{Ann. I.H.P. Proba. Stat.}, 46(1):45--58, 2010




\bibitem{vRS05} M.-K. von Renesse and K.-T. Sturm, Karl-Theodor, 
Transport inequalities, gradient estimates, entropy, and Ricci curvature. 
\textit{Comm. Pure Appl. Math.} 58 (2005), no. 7, 923--940. 

\bibitem{Ros11} N. Ross, Fundamentals of Stein's method. \textit{Probability Surveys} Vol. 8 (2011) 210--293.
  
\bibitem{San16} F. Santambrogio, Dealing with moment measures via entropy and optimal transport. \textit{J. Funct. Anal.} 271 (2016), no. 2, 418--436.
 

\bibitem{Sei17} C. Seis,  A quantitative theory for the continuity equation. \textit{Ann. I.H.P. (C) Analyse Non Lineaire} 34, No. 7: 1837--1850, 2017.

\bibitem{Sei18} C. Seis, 
Optimal stability estimates for continuity equations. 
\textit{Proc. Roy. Soc. Edinburgh Sect. A} 148 (2018), no. 6, 1279--1296. 

\bibitem{Ste72} C.  Stein, A bound for the error in the normal approximation to the distribution of a sum of dependent random variables.  In \textit{Proceedings of
the Sixth Berkeley Symposium on Mathematical Statistics and Probability}
(Univ. California, Berkeley, Calif., 1970/1971), Vol. II:
Probability theory,
pages 583--602, Berkeley, Calif., 1972. Univ. California Press.

\bibitem{Ste86} C. Stein. Approximate computation of expectations. Institute of Mathematical Statistics Lecture Notes - Monograph Series, 7. Institute of Mathematical Statistics, Hayward, CA, 1986.

\bibitem{SS83} E. M. Stein, and J.-O. Str\"omberg,  Behavior of maximal functions in $\mathbb{R}^n$ for large n. \textit{Ark. Mat.} 21 (2): 259–269, 1983.

\bibitem{Tre16} D. Trevisan, Well-posedness of multidimensional diffusion processes with weakly differentiable coefficients. 
\textit{Electron. J. Probab.} 21 (2016), Paper No. 22, 41 pp. 

\bibitem{Vil03} C. Villani, Topics in optimal transportation. Vol. 58 of Graduate Studies in Mathematics, Amer. Math. Soc., Providence, RI, 2003.

\bibitem{WZ04} X.-J. Wang and X. H. Zhu, K\"ahler-Ricci solitons on toric manifolds with positive first Chern class, \textit{Adv. Math.}, 188, 47--103, 2004. 

\bibitem{XZ16} L. Xie and X. Zhang,  
Sobolev differentiable flows of SDEs with local Sobolev and super-linear growth coefficients. 
\textit{Ann. Probab.} 44 (2016), no. 6, 3661--3687. 
\end{thebibliography}
\end{document}